\newtheorem{theorem}{Theorem}[section]
\newtheorem{lemma}[theorem]{Lemma}
\newtheorem{proposition}[theorem]{Proposition}
\newtheorem{definition}[theorem]{Definition}
\newtheorem{remark}[theorem]{Remark}
\newtheorem{example}[theorem]{Example}
\newcommand{\Z}{\mathbb{Z}}
\newcommand{\F}{\mathbb{F}}
\newcommand{\im}{\operatorname{Im}}
\newcommand{\id}{\operatorname{id}}
\newcommand{\sym}{\operatorname{Sym}}
\newcommand{\aut}{\operatorname{Aut}}
\newcommand{\End}{\operatorname{End}}
\newcommand{\soc}{\operatorname{Soc}}
\newcommand{\Aut}{\operatorname{Aut}}
\newenvironment{proof}{\par\noindent{\bf Proof.}}{$\qed$\par\bigskip}
\newcommand{\qed}{\enspace\vrule  height6pt  width4pt  depth2pt}
\begin{document}

\title{Asymmetric product of left braces and simplicity; new solutions of the Yang-Baxter equation}
\author{D. Bachiller \and F. Ced\'o \and E. Jespers \and J. Okni\'{n}ski}
\date{}

\maketitle

\begin{abstract}
The problem of constructing all the non-degenerate involutive
set-theoretic solutions of the Yang-Baxter equation  recently has
been reduced to the problem of describing all the left braces. In
particular, the classification of all  finite left braces is
fundamental in order to describe all finite such solutions of the
Yang-Baxter equation. In this paper we continue the study  of finite
simple left braces with the emphasis on the application of the
asymmetric product of left braces in order to construct new classes
of simple left braces. We do not only construct new classes but also
we interpret all previously known constructions  as asymmetric
products. Moreover, a construction is given of finite simple left
braces with a multiplicative group that is solvable of arbitrary
derived length.
\end{abstract}

\noindent 2010 MSC: 16T25, 20E22, 20F16.

\noindent Keywords: Yang-Baxter equation, set-theoretic solution,
brace, simple brace, asymmetric product.

\section{Introduction}
Rump \cite{Rump} introduced braces to study a class of solutions of
the Yang-Baxter equation, an important equation in mathematical
physics (see \cite{Yang,K}). Recall that a set-theoretic solution of
the Yang-Baxter equation is a map $r\colon X\times X\longrightarrow
X\times X$, such that $X$ is a non-empty set and
$$r_{12}r_{23}r_{12}=r_{23}r_{12}r_{23},$$
where $r_{ij}$ denotes the map $X\times X\times X\longrightarrow
X\times X\times X$ acting as $r$ on the $(i,j)$ components and as
the identity on the remaining component. The solution $r$ is
involutive if $r^2=\id_{X^2}$, and it is non-degenerate if
$r(x,y)=(f_x(y),g_y(x))$ and $f_x$ and $g_y$ are bijective maps from
$X$ to $X$ , for all $x,y\in X$. A systematic study of
non-degenerate, involutive set-theoretic solutions of the
Yang-Baxter equation (solutions of the YBE for short) was initiated
in \cite{ESS} and \cite{GIVdB}. Solutions of this type have received
a lot of attention in recent years (see for example
\cite{BCJ,BCJO1,CGIS,CJO,CJO2,GI,GIC,gat-maj,GI-braces,Rump1,Rump,Smokt,Smokt2,ven2016}).

Every solution $(X,r)$ of the YBE has associated two important
groups: its structure group $G(X,r)$  defined by the presentation
$\langle x\in X\mid xy=f_x(y)g_y(x)\rangle$ (where
$r(x,y)=(f_x(y),g_y(x))$), and its permutation group
$\mathcal{G}(X,r)=\langle f_x\mid x\in X\rangle$, a subgroup of
$\sym_X$. There exists a homomorphism $\phi\colon
G(X,r)\longrightarrow \mathcal{G}(X,r)$ such that $\phi(x)=f_x$, for
all $x\in X$. Etingof, Schedler and Soloviev proved in \cite{ESS}
that if $(X,r)$ is a finite solution of the YBE, then $G(X,r)$ and
$\mathcal{G}(X,r)$ are solvable groups.

Rump introduced braces to study this type of solutions \cite {Rump},
and noted that for every solution $(X,r)$ of the YBE, the groups
$G(X,r)$ and $\mathcal{G}(X,r)$ have a natural structure of left
braces such that $\phi\colon G(X,r)\longrightarrow \mathcal{G}(X,r)$
becomes a homomorphism of left braces. A left brace is a set $B$
with two operations, $+$ and $\cdot$, such that $(B,+)$ is an
abelian group, $(B,\cdot)$ is a group and
$$a\cdot(b+c)+a=a\cdot b+a\cdot c,$$
for all $a,b,c\in B$.

In \cite{BCJ}  a method is given to construct explicitly, given a
left brace $B$, all the solutions $(X,r)$ of the YBE such that
$\mathcal{G}(X,r)\cong B$ as left braces.  Therefore, the problem of
constructing all the finite solutions of the YBE is reduced to
describing all the finite left braces. In particular, as a
consequence of \cite[Theorem~3.1]{BCJ}, every finite left brace is
isomorphic to the left brace $\mathcal{G}(X,r)$, for some finite
solution $(X,r)$ of the YBE. Therefore the multiplicative group of
every finite left brace is solvable.  The finite solvable groups
isomorphic to the multiplicative group of a left brace are called
IYB groups \cite{CJR}. Not all the finite solvable groups are IYB
groups. In fact, there exist finite $p$-groups ($p$ a prime) that
are not of this type (see \cite{B2}). On the other hand, there are
some known classes of finite solvable groups that are  IYB groups,
(see \cite{BDG2,CJR}). It is an open problem whether every finite
nilpotent group of class three is an IYB group.

Every left brace $B$ has a left action $\lambda\colon
(B,\cdot)\longrightarrow \Aut(B,+)$, defined by
$\lambda(b)=\lambda_b$ and $\lambda_b(a)=b\cdot a-b$, for all
$a,b\in B$ (see \cite[Lemma~1]{CJO2}). It is called the lambda map
of the left brace $B$.  Note that, via the lambda map, the additive
structure of $B$ is determined by the multiplicative structure of
$B$ and vice-versa. It is known and easy to check that, in every
left brace $B$, $0=1$, i. e. the neutral elements of the groups
$(B,+)$ and $(B,\cdot)$ coincide. A left ideal of a left brace $B$
is a subgroup $I$ of $(B,+)$ such that $\lambda_b(a)\in I$, for all
$b\in B$ and all $a\in I$. Note that, in this case, for $x,y\in I$,
we have that
$$y^{-1}=-\lambda_{y^{-1}}(y)\in I\quad\mbox{and}\quad xy=\lambda_x(y)+x\in I,$$
thus $I$ also is a subgroup of $(B,\cdot)$. An ideal of a left brace
$B$ is a normal subgroup $N$ of $(B,\cdot)$ such that
$\lambda_b(a)\in N$, for all $b\in B$ and all $a\in N$. Note that
that, in this case, if $x,y\in N$, then
$$-y=\lambda_y(y^{-1})\in N\quad\mbox{and}\quad x+y=x\lambda_{x^{-1}}(y)\in N,$$
thus $N$ also is a subgroup of $(B,+)$, and hence it is a left ideal
of $B$. A non-zero left brace $B$ is simple if $\{ 0\}$ and $B$ are
the only ideals of $B$. We say that a left brace $B$ is trivial if
$a+b=a\cdot b$, for all $a,b\in B$.

In order to classify all finite left braces, it is natural to start
with classifying the finite simple left braces. To build a
foundation of the theory of finite simple braces it is crucial to
find various methods of constructions of such structures and to
understand their common features. In this paper, we continue a
systematic approach to this fundamental problem, initiated in
\cite{BCJO}. It is known that every simple left brace of prime power
order $p^n$ is a trivial brace of cardinality $p$, \cite[Corollary
on page 166]{Rump}. Until recently, these were the only known
examples of finite simple left braces. The first finite nontrivial
simple left braces have been constructed in \cite[Theorem~6.3 and
Section 7]{B3}; the additive groups of which are isomorphic  to
$\mathbb{Z}/(p_1)\times (\mathbb{Z}/(p_2))^{k(p_1-1)+1}$, where
$p_1,p_2$ are primes such that $p_2\mid p_1-1$  and $k$ is a
positive integer. In \cite{BCJO}, using the iterated matched product
construction, we built new families of finite simple left braces of
order $p_1^{n_1}\cdots p_s^{n_s}$, where $\{ p_1,\dots ,p_s\}$ is
any finite set of primes of cardinality greater that $1$ and
$n_1,\dots ,n_s$ are positive integers depending on the different
primes $p_1,\dots ,p_s$. It is remarkable that the derived length of
the multiplicative group of all the previously known simple left
braces is at most $3$.

There are some known restrictions on the possible cardinalities of
finite simple left braces (see \cite{Smokt}), but it is not at all
clear which cardinalities may occur.

In the study of finite simple groups the classification of all
minimal simple groups obtained by Thompson was of great significance
(see \cite[page~444]{gor}). Recall that a minimal simple group is a
finite non-commutative simple group all of whose proper subgroups
are solvable. Thus it seems natural to ask for the meaning of a
minimal non-trivial finite simple left brace. This should be a
non-trivial finite simple left brace such that all the factors of
the composition series of every proper subbrace are trivial. In
Section~\ref{solv}, we introduce the concept of solvable left brace
in a similar way as it is done in group theory. Using this concept,
a minimal non-trivial finite simple left brace is a non-trivial
simple left brace all of whose proper subbraces are solvable. We
expect that the classification of such finite simple left braces
will be important in the further study of finite simple left braces.

In Sections~\ref{asymmetric} and~\ref{wreathprod}, we recall the two
main tools of the approach presented in this paper: the asymmetric
product of left braces and the wreath product of left braces,
respectively. We also prove some technical results useful for the
construction of new simple left braces.

In Section~\ref{new}, using the asymmetric product, we construct new
classes of simple left braces. Furthermore, we construct first
examples of finite simple left braces with a multiplicative group
that is solvable of arbitrary derived length. This is accomplished
via the asymmetric product of special left braces, constructed using
the wreath product, and trivial braces.

In Section~\ref{known}, we interpret all previously known
constructions of finite left braces as asymmetric products.

\section{Solvable braces}\label{solv}

As motivated in the introduction, in this section we define solvable
left braces, a class of left braces including all the left nilpotent
left braces and all the right nilpotent left braces, introduced in
\cite{CGIS}. These concepts have some similarity with the
corresponding concepts in group theory.

Let $B$ be a left brace. Consider the operation $*$ of $B$ defined
by $a*b:=a\cdot b-a-b=(\lambda_a-\id)(b)$, for all $a,b\in B$. Rump
in \cite{Rump} introduced two series of subbraces of $B$. One
consists of left ideals $B^{n}$, defined inductively as follows: $B^1=B$ and
$$B^{n+1}=B*B^{n}=\{\sum_{i=1}^ma_i*b_i\mid a_i\in B,\; b_i\in B^{n},\; \mbox{ for every }i\},$$
for all positive integers $n$. Another one consisting of ideals
$B^{(n)}$, defined as follows. $B^{(1)}=B$ and
$$B^{(n+1)}=B^{(n)}*B=\{\sum_{i=1}^ma_i*b_i\mid a_i\in B^{(n)},\; b_i\in B,\; \mbox{ for every }i\},$$
for all positive integers $n$. Note that $B^{(2)}=B^2$, but in
general $B^{(3)}$ and $B^3$ are different because $*$ is not
associative in general.

We say that a left brace $B$ is left nilpotent if there exists a
positive integer $n$ such that $B^{n}=0$. A left brace $B$ is right
nilpotent if there exists a positive integer such that $B^{(n)}=0$.

\begin{remark}\label{nil}
{\rm Smoktunowicz in \cite[Theorem~1]{Smokt2} proved that a finite
left brace $B$ is left nilpotent if and only if its multiplicative
group $(B,\cdot)$ is nilpotent. In particular, every left brace of
order a power of a prime is left nilpotent.

On the other hand, a left brace $B$ is right nilpotent if and only
if the solution of the YBE associated with $B$ is a multipermutation
solution (see \cite[Proposition~6]{CGIS}). Recall, if $B$ is a left
brace, its associated solution of the YBE is $(B,r)$, where
$$r(a,b)=(\lambda_a(b),\lambda^{-1}_{\lambda_a(b)}(a)),$$
for all $a,b\in B$. In particular, every non-zero right nilpotent
left brace $B$ has non-zero socle, that is $\soc(B)=\{ a\in B\mid
a\cdot b=a+b\mbox{ for all } b\in B \}\neq \{ 0\}$.

In \cite{B} it is proved that there exist two left braces $B_1$ and
$B_2$ of order $8$ such that $\soc(B_i)=\{ 0\}$. Therefore $B_1$ and
$B_2$ are left nilpotent but not right nilpotent.

Consider the trivial left braces
$K=\mathbb{Z}/(2)\times\mathbb{Z}/(2)$ and $\mathbb{Z}/(3)$. Let
$\alpha\colon \mathbb{Z}/(3)\longrightarrow \Aut(K,+)$ be the action
defined by $\alpha(x)=\alpha_{x}$ and
$$\alpha_x(y,z)=(y,z)\left(\begin{array}{cc}
0&1\\
1&1\end{array}\right)^x,$$ for all $x\in \mathbb{Z}/(3)$ and all
$y,z\in \mathbb{Z}/(2)$. Then the semidirect product $B_3=K\rtimes
\mathbb{Z}/(3)$ is a left brace which is not left nilpotent. Now we
have
\begin{eqnarray*}
\lefteqn{((y_1,z_1),x_1)*((y_2,z_2),x_2)}\\
&=&((y_1,z_1),x_1)\cdot((y_2,z_2),x_2)-((y_1,z_1),x_1)-((y_2,z_2),x_2)\\
&=&((y_1,z_1)+\alpha_{x_1}(y_2,z_2),x_1+x_2)-((y_1,z_1),x_1)-((y_2,z_2),x_2)\\
&=&(\alpha_{x_1}(y_2,z_2)-(y_2,z_2),0),
\end{eqnarray*}
for all $y_i,z_i\in \mathbb{Z}/(2)$ and all $x_i\in \mathbb{Z}/(3)$.
Hence $B_3^{(2)}=K\times \{0\}$ and, since
$((y_1,z_1),0)*((y_2,z_2),x_2)=((0,0),0)$, we get that
$B_3^{(3)}=\{0\}$. Therefore $B_3$ is right nilpotent.}\end{remark}

Recall that a trivial brace is a left brace $B$ such that any
$a,b\in B$ satisfy $a\cdot b=a+b$. We also say that $B$ has trivial
structure. Thus $B$ is a trivial brace if and only if
$B^2=\{0\}$.

Let $B$ be a left brace. It was proven in \cite[page
161]{Rump} that $B^2$ is always an ideal of $B$. Observe that
$B/B^2$ has trivial brace structure because $a\cdot
b-a-b=a*b\in B^2$. Moreover, if $I$ is an ideal of $B$, $B/I$ has
trivial structure if and only if $B^2\subseteq I$.

In particular, note that $[(B,\cdot),(B,\cdot)]\leq B^2$ because $(B/B^2,\cdot)$ is abelian.

\begin{definition}
A \emph{solvable brace} is any left brace $B$
which has a series $0=B_0\subseteq B_1\subseteq \dots\subseteq B_m=B$ such that $B_i$ is an ideal of $B_{i+1}$, and such that
$B_{i+1}/B_i$ is a trivial brace for any $i\in\{0,1,\dots,m-1\}$.
\end{definition}

Using the same arguments as in group theory, one can prove the
following results.
\begin{proposition}{\rm (Second Isomorphism Theorem).}
Let $B$ be a left brace, let $H$ be a sub-brace of $B$, and let $N$ be an
ideal of $B$. Then, $HN$ is a sub-brace of $B$. Moreover,
$$
HN/N\cong H/(H\cap N).
$$
\end{proposition}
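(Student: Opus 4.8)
The plan is to mimic the classical group-theoretic proof of the Second Isomorphism Theorem, but carefully checking at each stage that the set-theoretic constructions respect \emph{both} operations of the left brace and interact correctly with the lambda map. Recall that for a left brace the additive and multiplicative structures are linked through $\lambda$, so "$HN$" needs to be shown to be a subgroup of $(B,+)$ and of $(B,\cdot)$ simultaneously, and likewise closed under all $\lambda_b$ for $b\in HN$.

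First I would define $HN$ as the set of products $\{h\cdot n\mid h\in H,\ n\in N\}$ in the multiplicative group $(B,\cdot)$. Since $N$ is an ideal, it is in particular a normal subgroup of $(B,\cdot)$, so by the usual group argument $HN$ is a subgroup of $(B,\cdot)$; moreover $HN=NH$. Next I would show $HN$ is a subgroup of $(B,+)$. For this, note that every element of $HN$ can also be written additively: using the identity $x\cdot y=\lambda_x(y)+x$ (equivalently $xy = x + \lambda_x(y)$, rearranged from the brace axiom), we get $h\cdot n = h + \lambda_h(n)$, and $\lambda_h(n)\in N$ because $N$ is a left ideal, so $HN\subseteq H+N$. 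Conversely, for $h+n$ with $h\in H$, $n\in N$, write $h+n = h\cdot\lambda_{h^{-1}}(n)$ (this is the identity $x+y = x\lambda_{x^{-1}}(y)$ noted in the excerpt), and $\lambda_{h^{-1}}(n)\in N$ again since $N$ is a left ideal; hence $h+n\in HN$. Therefore $HN = H+N$ as sets, and since $H$ and $N$ are additive subgroups with $N$ normal (indeed the additive group is abelian), $H+N$ is an additive subgroup. So $HN$ is simultaneously a subgroup of $(B,+)$ and $(B,\cdot)$.

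Then I would verify that $HN$ is closed under $\lambda_b$ for every $b\in HN$, i.e. that it is a sub-brace. Take $b = h\cdot n\in HN$ and an arbitrary element, which by the above we may write as $h'+ n'$ with $h'\in H$, $n'\in N$. Since $\lambda_b$ is additive, $\lambda_b(h'+n') = \lambda_b(h') + \lambda_b(n')$. Now $\lambda_b(n')\in N$ since $N$ is a left ideal of the whole brace $B$. For $\lambda_b(h')$: write $\lambda_{h\cdot n} = \lambda_h\circ\lambda_n$ (the lambda map is a homomorphism $(B,\cdot)\to\Aut(B,+)$), so $\lambda_{hn}(h') = \lambda_h(\lambda_n(h'))$; here $\lambda_n(h') = n\cdot h' - n = h'' \in B$, and more usefully $\lambda_n(h') = \lambda_n(h') - h' + h'$; since $\lambda_n(h')-h' = n*h' \in N$ (as $N$ is an ideal, hence $N*B\subseteq N$ — this uses $N$ being a \emph{two-sided} ideal, or rather that $n\cdot h' - n - h'\in N$ because $N$ normal gives $n\cdot h'\cdot n^{-1}\in N$ and a short computation, but the cleanest route is: $\lambda_n(h') = n h' - n$, and modulo $N$ this equals $h'$ since $nh'\equiv h' \pmod N$ additively... ) — here I would simply invoke that $N$ is a left ideal so that $\lambda_n(h')\in h' + N \subseteq H + N = HN$, and then apply $\lambda_h$, which preserves $H+N = HN$ because $\lambda_h$ preserves $H$ (as $H$ is a sub-brace) and preserves $N$ (as $N$ is a left ideal). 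Hence $\lambda_b(HN)\subseteq HN$, so $HN$ is a sub-brace of $B$.

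Finally I would establish the isomorphism $HN/N \cong H/(H\cap N)$. Note $N$ is an ideal of the sub-brace $HN$ (normality and $\lambda$-invariance are inherited from $B$), and $H\cap N$ is an ideal of $H$ for the same reason, so both quotient braces make sense. Consider the composite map $H \hookrightarrow HN \twoheadrightarrow HN/N$; it is a homomorphism of left braces (it respects $+$, $\cdot$, and intertwines the lambda maps, being a restriction of the canonical projection). It is surjective because $HN = NH$, so every coset $hn + N = h + N$ (using $HN=H+N$) is hit by $h\in H$. Its kernel is $\{h\in H\mid h\in N\} = H\cap N$. By the First Isomorphism Theorem for left braces (which is the corresponding standard result; I would cite it or note it follows from the group and additive-group isomorphism theorems combined), $H/(H\cap N)\cong HN/N$. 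The main obstacle, and the only place requiring genuine care beyond transcribing the group proof, is the verification that $HN$ is closed under the lambda action — everything else reduces to the interplay $x\cdot y = x + \lambda_x(y)$ together with the hypotheses that $H$ is a sub-brace and $N$ is an ideal.
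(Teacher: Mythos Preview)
Your argument is correct and is precisely the group-theoretic approach the paper alludes to (the paper gives no proof, only the remark that ``the same arguments as in group theory'' apply). Note that your third paragraph is superfluous: once $HN=H+N$ is shown to be a subgroup of both $(B,+)$ and $(B,\cdot)$, closure under $\lambda$ is automatic from $\lambda_a(b)=a\cdot b-a$, which also lets you bypass the slightly tangled reasoning there.
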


\begin{proposition}
\begin{enumerate}[(a)]
\item Define $d_1(B)=B^2$, and $d_{i+1}(B)=d_i(B)^2$ for every positive integer $i$. Then, $B$ is a solvable brace if and only if $d_k(B)=0$ for some $k$.
\item Let $B$ be a left brace, and let $I$ be an ideal of $B$. If $I$ and $B/I$ are solvable braces, then $B$ is also solvable.
\item Any sub-brace, and any quotient of a solvable brace is solvable.
\end{enumerate}
\end{proposition}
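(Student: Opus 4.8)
The plan is to mirror the standard proofs of the analogous statements about solvable groups, using the fact established just before the definition that $B/B^2$ always has trivial brace structure and that $B^2$ is an ideal of $B$. The key technical tool throughout is the Second Isomorphism Theorem stated above, together with the observation that for an ideal $I$ of $B$, the quotient $B/I$ has trivial structure precisely when $B^2 \subseteq I$; I would also use that if $I$ is an ideal of $B$ contained in a subbrace $H$, then $I$ is an ideal of $H$, and that $H^2 \subseteq B^2$.

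For part (a), the forward direction: if $B$ is solvable via $0 = B_0 \subseteq B_1 \subseteq \dots \subseteq B_m = B$, I would prove by downward induction that $d_i(B) \subseteq B_{m-i}$ for all $i$. The base case $i=0$ is $d_0(B) = B \subseteq B_m$ (reading $d_0(B) = B$), or more cleanly start from $d_1(B) = B^2 \subseteq B_{m-1}$, which holds because $B/B_{m-1}$ is trivial. For the inductive step, assuming $d_i(B) \subseteq B_{m-i}$, one has $d_{i+1}(B) = d_i(B)^2 \subseteq B_{m-i}^2$; since $B_{m-i-1}$ is an ideal of $B_{m-i}$ with trivial quotient, $B_{m-i}^2 \subseteq B_{m-i-1}$, giving $d_{i+1}(B) \subseteq B_{m-i-1}$. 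Hence $d_m(B) \subseteq B_0 = 0$. Conversely, if $d_k(B) = 0$, then the chain $0 = d_k(B) \subseteq d_{k-1}(B) \subseteq \dots \subseteq d_1(B) = B^2 \subseteq B$ is a series of the required type: each $d_{i+1}(B) = d_i(B)^2$ is an ideal of $d_i(B)$ with trivial quotient $d_i(B)/d_i(B)^2$, and $B^2$ is an ideal of $B$ with $B/B^2$ trivial. One caveat to check carefully: the definition requires each $B_i$ to be an ideal of $B_{i+1}$ (not of $B$), so it suffices that $d_{i+1}(B)$ is an ideal of $d_i(B)$, which follows from applying ``$C^2$ is an ideal of $C$'' with $C = d_i(B)$.

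For part (b), given ideals with $I$ and $B/I$ solvable, I would use the criterion from (a). Since $B/I$ is solvable, $d_k(B/I) = 0$ for some $k$; because $d_k(B/I) = d_k(B)I/I$ (the image of $d_k(B)$ under the quotient map commutes with forming the $d$-series, as $B^2$ maps onto $(B/I)^2$), this says $d_k(B) \subseteq I$. Since $I$ is solvable, $d_\ell(I) = 0$ for some $\ell$; and $d_{k+\ell}(B) = d_\ell(d_k(B)) \subseteq d_\ell(I) = 0$ using monotonicity of the $d$-series in its argument (if $C \subseteq D$ are braces with $C$ an ideal... here I need $d_k(B) \subseteq I$ and that $d_j$ is monotone, i.e. $C \subseteq D \Rightarrow d_j(C) \subseteq d_j(D)$, proved by induction from $C^2 \subseteq D^2$). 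Hence $B$ is solvable. For part (c): a subbrace $H$ of a solvable $B$ satisfies $d_k(H) \subseteq d_k(B) = 0$ by the same monotonicity (with $H \subseteq B$), so $H$ is solvable; a quotient $B/N$ satisfies $d_k(B/N) = d_k(B)N/N = 0$, so it is solvable.

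The main obstacle is not any deep idea but bookkeeping about which objects are ideals of which: the definition of solvable brace uses a \emph{subnormal}-style series ($B_i$ ideal of $B_{i+1}$, not of $B$), so I must be careful that $d_{i+1}(B)$ is genuinely an ideal of $d_i(B)$ and not merely a left ideal, and that intersections and images behave well. The relevant facts — $C^2$ is an ideal of $C$, the image of an ideal under a brace epimorphism is an ideal, the Second Isomorphism Theorem, and the compatibility $d_k$ of a quotient equals the image of $d_k$ — are all either quoted above or proved by the same short inductions as in group theory, so once these are in place the three parts follow formally.
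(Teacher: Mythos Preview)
Your proposal is correct and is precisely the approach the paper intends: the paper's ``proof'' consists solely of the remark that one can use the same arguments as in group theory, and your outline is exactly the standard group-theoretic argument (derived series vs.\ subnormal series, monotonicity of $d_i$, and compatibility with quotients) translated to braces. Your care about the subnormal nature of the series and about $C^2$ being an ideal of $C$ is appropriate and fills in the bookkeeping the paper leaves implicit.
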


The third property means that there are neither simple sub-braces
nor simple quotients in a solvable brace, except for the trivial
simple braces $\Z/(p)$, where $p$ is a prime.

\begin{remark} {\rm Note that for every left brace $B$, $d_n(B)\subseteq
B^{n+1}\cap B^{(n+1)}$. Hence every left nilpotent left brace is
solvable, and every right nilpotent left brace is solvable. We have
seen in Remark~\ref{nil} that there exists a finite left nilpotent
left brace $B_1$ which is not right nilpotent, and there exists a
finite right nilpotent left brace $B_3$ which is not left nilpotent.
Hence the left brace $B_1\times B_3$ is neither left nilpotent nor
right nilpotent. But clearly $B_1\times B_2$ is
solvable.}\end{remark}

On the other extreme end, one can introduce the following class of
braces.

\begin{definition}
A left brace $B$ is called \emph{perfect} if it satisfies $B^2=B$.
\end{definition}

For instance, every non-trivial simple left brace is perfect. The
following example shows that the converse is not true.  So, this
class should play an important role in the approach towards a
description of all finite left simple braces, and should deserve a
further study.

\begin{example}
Take $B$ to be the simple left brace of order $24$ defined in
\cite{B3}. Since the multiplicative group of $B$ is isomorphic to
$S_4$, we can define a morphism $\alpha: (B,\cdot)\rightarrow
\aut(\Z/(3))$, given by the sign morphism of $S_4$. Then, the
semidirect product of braces $\Z/(3)\rtimes B$ with respect to
$\alpha$, where $\Z/(3)$ is considered as a trivial brace, is a
perfect brace which is not simple.
\end{example}

\section{Asymmetric product}\label{asymmetric}
In this section we recall a construction of left braces introduced
by Catino, Colazzo and  Stefanelli \cite{CCS}, the asymmetric
product of two left braces. Furthermore we give a result that
can be used to construct new simple left braces using asymmetric
products.

Let $S$ and $T$ be two (additive) abelian groups. Recall that a (normalized)
symmetric $2$-cocyle on $T$ with values in $S$ is a map $b\colon
T\times T\longrightarrow S$ such that
\begin{enumerate}[(i)]
\item\label{cond0} $b(0,0)=0$;
\item\label{cond1} $b(t_1,t_2)=b(t_2,t_1)$;
\item\label{cond2} $b(t_1+t_2,t_3)+b(t_1,t_2)=b(t_1,t_2+t_3)+b(t_2,t_3)$, for all $t_1,t_2,t_3\in T$.
\end{enumerate}
As a consequence, we get that $b(t,0)=b(0,t)=0$, for all $t\in T$.

\begin{theorem}\label{ccs}  (\cite[Theorem~3]{CCS}).
Let $T$ and $S$ be two left braces. Let $b \colon T\times T
\longrightarrow S$ be a symmetric $2$-cocycle on $(T, +)$ with
values in $(S, +)$, and let $\alpha \colon (S,\cdot)\longrightarrow
\Aut(T,+,\cdot)$ be a homomorphism of groups such that
\begin{eqnarray}\label{cond3} && s\cdot b(t_2, t_3) + b(t_1\cdot \alpha_s(t_2 + t_3),t_1)=
b(t_1\cdot\alpha_s(t_2), t_1\cdot\alpha_s(t_3))+ s,\end{eqnarray}
 where $\alpha_s=\alpha(s)$, for all $s\in S$ and $t_1, t_2, t_3\in T$. Then the addition and
multiplication over $T\times S$ given by
$$
(t_1,s_1)+(t_2,s_2)=(t_1+t_2,~s_1+s_2+b(t_1,t_2)),
$$
$$
(t_1,s_1)\cdot (t_2,s_2)=(t_1\cdot \alpha_{s_1}(t_2),~s_1\cdot s_2),
$$
define a structure of left brace on $T\times S$. We call this left
brace the asymmetric product of $T$ by $S$ (via $b$ and $\alpha$)
and denote it by  $T\rtimes_\circ S$.
\end{theorem}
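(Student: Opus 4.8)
The plan is to verify directly that the two proposed operations on $T\times S$ satisfy the axioms of a left brace: namely that $(T\times S,+)$ is an abelian group, that $(T\times S,\cdot)$ is a group, and that the left brace compatibility identity $a\cdot(b+c)+a=a\cdot b+a\cdot c$ holds. I would organize the computation so that each cocycle/compatibility hypothesis is invoked exactly where it is needed, which makes it transparent why conditions \eqref{cond0}--\eqref{cond2} and \eqref{cond3} are precisely the right ones.

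First I would check that $(T\times S,+)$ is an abelian group. Commutativity is immediate from the symmetry condition \eqref{cond1}, since $b(t_1,t_2)=b(t_2,t_1)$ and $(T,+)$, $(S,+)$ are abelian. Associativity of $+$ is exactly the $2$-cocycle identity \eqref{cond2}: expanding $((t_1,s_1)+(t_2,s_2))+(t_3,s_3)$ and $(t_1,s_1)+((t_2,s_2)+(t_3,s_3))$, the first coordinates agree trivially and the second coordinates agree iff $b(t_1+t_2,t_3)+b(t_1,t_2)=b(t_1,t_2+t_3)+b(t_2,t_3)$. The neutral element is $(0,0)$, using the consequence $b(t,0)=b(0,t)=0$; the inverse of $(t,s)$ is $(-t,-s-b(t,-t))$, which one checks using $b(t,-t)=b(-t,t)$. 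Next, $(T\times S,\cdot)$ is a group: the multiplication $(t_1,s_1)\cdot(t_2,s_2)=(t_1\cdot\alpha_{s_1}(t_2),\,s_1\cdot s_2)$ is literally the semidirect product $(T,\cdot)\rtimes_\alpha(S,\cdot)$ of the multiplicative groups along the homomorphism $\alpha\colon(S,\cdot)\to\Aut(T,+,\cdot)$ (we only use that each $\alpha_s$ is an automorphism of $(T,\cdot)$ and that $\alpha$ is a homomorphism), so associativity and inverses are the standard semidirect product facts; the identity is $(0,1)=(1,1)$ since $0=1$ in both $T$ and $S$.

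The substantive step — and the one I expect to be the main obstacle — is verifying the left brace identity
$$(t_1,s_1)\cdot\big((t_2,s_2)+(t_3,s_3)\big)+(t_1,s_1)=(t_1,s_1)\cdot(t_2,s_2)+(t_1,s_1)\cdot(t_3,s_3).$$
I would expand both sides coordinatewise. For the first coordinate, the left side gives $t_1\cdot\alpha_{s_1}(t_2+t_3)$ and the right side gives $t_1\cdot\alpha_{s_1}(t_2)+t_1\cdot\alpha_{s_1}(t_3)$; these are equal because $\alpha_{s_1}$ is an \emph{automorphism of the left brace} $(T,+,\cdot)$, hence $\alpha_{s_1}(t_2+t_3)=\alpha_{s_1}(t_2)+\alpha_{s_1}(t_3)$, and then the left brace identity in $T$ itself, applied to $a=t_1$, $b=\alpha_{s_1}(t_2)$, $c=\alpha_{s_1}(t_3)$, turns $t_1\cdot(\alpha_{s_1}(t_2)+\alpha_{s_1}(t_3))+t_1$ into $t_1\cdot\alpha_{s_1}(t_2)+t_1\cdot\alpha_{s_1}(t_3)$ — note the extra $+t_1$ on the left side is exactly what makes this work. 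For the second coordinate, carefully carrying the $b$-terms through the definitions of $+$ and $\cdot$, the left side yields $s_1\cdot(s_2+s_3+b(t_2,t_3))+s_1 + b\big(t_1\cdot\alpha_{s_1}(t_2+t_3),\,t_1\big)$ while the right side yields $s_1 s_2 + s_1 s_3 + b\big(t_1\cdot\alpha_{s_1}(t_2),\,t_1\cdot\alpha_{s_1}(t_3)\big)$; after using the left brace identity in $S$ to rewrite $s_1\cdot(s_2+s_3+b(t_2,t_3))+s_1$ as $s_1 s_2 + s_1 s_3 + s_1\cdot b(t_2,t_3) - s_1$ (or the appropriate rearrangement), the equality of the two second coordinates reduces exactly to the compatibility condition \eqref{cond3} with $s=s_1$. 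The bookkeeping with the mixed additive/multiplicative operations on $S$ is where errors are easiest to make, so I would be methodical about the order of operations and about where $0=1$ is used; once the expansions are lined up, each axiom collapses onto precisely one hypothesis.
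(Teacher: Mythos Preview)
Your proposal is correct: the direct verification that $(T\times S,+)$ is abelian via the symmetric $2$-cocycle conditions, that $(T\times S,\cdot)$ is the semidirect product group, and that the left brace identity reduces in the first coordinate to the brace identity in $T$ (using that $\alpha_{s_1}\in\Aut(T,+,\cdot)$) and in the second coordinate precisely to condition~\eqref{cond3}, is exactly the right argument, and your bookkeeping is accurate.

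Note, however, that the paper does not give its own proof of this theorem: it is quoted from \cite[Theorem~3]{CCS} and stated without proof in Section~\ref{asymmetric}. Your verification is the standard one and is essentially what the original reference does, so there is nothing to compare against within this paper itself.
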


Note that the lambda map of $T\rtimes_\circ S$ is defined by
\begin{eqnarray} \label{deflambda}
\lambda_{(t_1,s_1)}(t_2,s_2)&=&
\left(
\lambda_{t_1}\alpha_{s_1}(t_2),~
\lambda_{s_1}(s_2)-b(\lambda_{t_1}\alpha_{s_1}(t_2),t_1)
\right),
\end{eqnarray}
and its socle is
$$
\soc(T\rtimes_\circ S)=\{(t,s)\mid \lambda_s=\id_S,\;\lambda_t\circ
\alpha_s=\id_T,\; b(t,t')=0\mbox{ for all }t'\in T\}.
$$
Moreover, the subset $T\times\{0\}$ is a normal subgroup of $(T\rtimes_\circ S,\cdot)$, and $\{0\}\times S$ is a left
ideal of $T\rtimes_\circ S$.

A particular case of this theorem is when we assume that $b$ is a symmetric bilinear form. In this case, conditions
(\ref{cond0})-(\ref{cond2}) are automatic, and
condition (\ref{cond3}) becomes
$$
\lambda_s(b(t_2,~t_3))=b(\lambda_{t_1}\alpha_{s}(t_2),~\lambda_{t_1}\alpha_{s}(t_3)),
$$
which is equivalent to the two conditions:
\begin{align}
\lambda_s(b(t_2,~t_3))&=b(\alpha_{s}(t_2),~\alpha_{s}(t_3)),\label{condAlpha} \\
b(t_2,~t_3)&=b(\lambda_{t_1}(t_2),~\lambda_{t_1}(t_3)),\label{condLambda}
\end{align}
for all $s\in S$ and $t_1,t_2,t_3\in T$.

 To study simplicity of left braces we will often make use of the
following result without any further explicit reference.

\begin{lemma}(\cite[Lemma~2.5]{BCJO}).
Let $B$ be a left brace. Let $I$ be an ideal of $B$. Then
$$(\lambda_a-\id)(b)\in I,$$
for all $a\in I$ and all $b\in B$.
\end{lemma}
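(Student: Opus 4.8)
The plan is to rewrite $(\lambda_a-\id)(b)$ as the $*$-product $a*b=a\cdot b-a-b$ (this identity is recorded at the start of Section~\ref{solv}), and then to exploit both faces of the hypothesis that $I$ is an ideal: that $I$ is a normal subgroup of $(B,\cdot)$, and that — as noted in the introduction — $I$ is simultaneously a left ideal, hence a subgroup of $(B,+)$. Concretely, set $c=b^{-1}\cdot a\cdot b$. Since $I$ is normal in $(B,\cdot)$ and $a\in I$, we have $c\in I$, and $a\cdot b=b\cdot c$. Using the identity $x\cdot y=x+\lambda_x(y)$ valid in any left brace — it is just the definition $\lambda_x(y)=x\cdot y-x$ together with commutativity of $+$ — one computes
$$a*b=a\cdot b-a-b=b+\lambda_b(c)-a-b=\lambda_b(c)-a.$$
Now $\lambda_b(c)\in I$ because $I$ is a left ideal and $c\in I$, and $a\in I$; since $I$ is a subgroup of $(B,+)$, we conclude $\lambda_b(c)-a\in I$, that is $(\lambda_a-\id)(b)=a*b\in I$, as claimed.

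An alternative and conceptually cleaner route is to pass to the quotient left brace $B/I$. The natural projection $\pi\colon B\to B/I$ is a homomorphism of left braces and hence intertwines the lambda maps, so $\pi\bigl((\lambda_a-\id)(b)\bigr)=(\lambda_{\pi(a)}-\id)(\pi(b))$. Since $a\in I$ we have $\pi(a)=0=1$ in $B/I$, and $\lambda_1=\id$ because $\lambda_1(x)=1\cdot x-1=x$; hence the right-hand side is $0$, and therefore $(\lambda_a-\id)(b)\in\ker\pi=I$.

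The only point needing care — the ``main obstacle'', such as it is — is remembering to invoke the fact, established in the introduction, that an ideal $I$ is at once a normal subgroup of $(B,\cdot)$ and a subgroup of $(B,+)$ (equivalently, a left ideal); without this one cannot both form the conjugate $b^{-1}ab\in I$ and subtract $a$ additively while staying inside $I$. Everything else is a one-line manipulation with the defining identities of a left brace.
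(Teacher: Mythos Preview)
Your proof is correct. In fact you give two valid arguments: the direct computation $a*b=\lambda_b(b^{-1}ab)-a$, which lands in $I$ because ideals are simultaneously normal in $(B,\cdot)$ and closed under $\lambda$ and under subtraction; and the quotient argument, which is cleaner still. Both are standard and both work.

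As for comparison with the paper: there is nothing to compare. The present paper does not prove this lemma; it merely cites it as \cite[Lemma~2.5]{BCJO} and then uses it freely. Your write-up therefore supplies more than the paper does here. If anything, the quotient argument is the one most in the spirit of how the lemma is \emph{used} throughout the paper (one repeatedly computes $(\lambda_x-\id)(y)$ for $x$ in a candidate ideal to show the ideal swallows more elements), so that version would be the natural one to include if a proof were wanted.
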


\begin{theorem}
Let $S$ be a simple non-trivial left brace. Let $T$ be a trivial
brace and let $T\rtimes_{\circ} S$ be an asymmetric product of $T$
by $S$ via $b\colon T\times T\longrightarrow S$ and $\alpha\colon
(S,\cdot)\longrightarrow \Aut(T,+)$. Suppose that for every $t\in
T\setminus\{ 0\}$ there exists $t'\in T$ such that $b(t,t')\neq 0$.
Then $T\rtimes_{\circ} S$ is simple if and only if $\sum_{s\in
S}\im(\alpha_s-\id)=T$.
\end{theorem}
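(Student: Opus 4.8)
The plan is to prove both implications by analyzing how ideals of $T\rtimes_\circ S$ interact with the two distinguished pieces $T\times\{0\}$ and $\{0\}\times S$. Throughout I write $B=T\rtimes_\circ S$, and I recall from the remarks after Theorem~\ref{ccs} that $T\times\{0\}$ is a normal subgroup of $(B,\cdot)$ and $\{0\}\times S$ is a left ideal of $B$; moreover the lambda map is given by \eqref{deflambda}, which since $T$ is trivial (so $\lambda_{t_1}=\id_T$) simplifies to $\lambda_{(t_1,s_1)}(t_2,s_2)=(\alpha_{s_1}(t_2),\,\lambda_{s_1}(s_2)-b(\alpha_{s_1}(t_2),t_1))$.

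\medskip

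\textbf{Forward direction.} Assume $B$ is simple. First I would show $U:=\sum_{s\in S}\im(\alpha_s-\id)$ is an $(S,\cdot)$-invariant additive subgroup of $T$; here one uses that $\alpha$ is a homomorphism into $\Aut(T,+,\cdot)$, so for $s,s'\in S$ one has $\alpha_{s'}(\alpha_s(t)-t)=\alpha_{s's}(t')-\alpha_{s'}(t')$ after rewriting, placing $\alpha_{s'}(U)\subseteq U$. Then $U\times\{0\}$ is a normal subgroup of $(B,\cdot)$ (normality in the first coordinate follows because conjugation in $B$ acts on the $T$-coordinate of an element of $T\times\{0\}$ via the $\alpha$'s and left multiplication by $T$, which is trivial), and it is $\lambda$-invariant by the formula above together with $\lambda_{s_1}(0)=0$ and $b(\alpha_{s_1}(t),t_1)\in S$... wait, more carefully: for $(t,0)\in U\times\{0\}$, $\lambda_{(t_1,s_1)}(t,0)=(\alpha_{s_1}(t),-b(\alpha_{s_1}(t),t_1))$, whose second coordinate need not vanish, so $U\times\{0\}$ is \emph{not} obviously an ideal. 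The correct target is therefore not $U\times\{0\}$. Instead I would argue: suppose $U\neq T$; I claim $U\times\{0\}$ together with $\{0\}\times S$ generates a proper ideal, or better, work with the quotient. The cleaner route is to observe that $T\times\{0\}$ is itself a brace (isomorphic, as an additive group, to $T$), and $B/(T\times\{0\})\cong S$; since $S$ is simple, any ideal $N$ of $B$ satisfies either $N\subseteq T\times\{0\}$ or $N+ (T\times\{0\})=B$. So to reach a contradiction from $U\subsetneq T$ I must produce a nonzero ideal contained in $T\times\{0\}$. I would take $N=\{0\}\times S$'s "complement image": more precisely, consider $V:=\{t\in T\mid \alpha_s(t)=t \text{ for all } s\in S,\ b(t,t')=0\text{ for all }t'\in T\}$; the hypothesis "for every $t\neq 0$ some $b(t,t')\neq 0$" forces $V=0$, which handles the socle but not yet simplicity. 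The actual mechanism: if $U\subsetneq T$, pick a nonzero $\bar\chi$ on $T/U$ and... this is getting complicated — the real argument (and the main obstacle) is to show that $U\subsetneq T$ produces a proper nonzero ideal, and I expect it is done by taking the ideal $\mathrm{Ann}$-type object $\{(t,s): s\in S,\ t\in T\ \text{with suitable compatibility}\}$; concretely I believe the intended ideal is $T\times\{0\}$ itself when $U\subsetneq T$ fails to generate, so that $B$ having $T\times\{0\}$ as a proper nonzero ideal contradicts simplicity unless $T=0$, and the hypothesis must be used to rule out $T=0$ being the only option. Let me restate the plan honestly: I would show $(i)$ any nonzero ideal $N$ meets $T\times\{0\}$ nontrivially, using that $B/(T\times\{0\})\cong S$ is simple so either $N\supseteq$ a translate covering $T\times\{0\}$ or $N\cap(T\times\{0\})$ projects onto... and then $(ii)$ the minimal nonzero ideal inside $T\times\{0\}$ is forced to be $\langle\,\im(\alpha_s-\id)\,\rangle\times\{0\}$-plus-corrections, whence simplicity forces $U=T$.

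\medskip

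\textbf{Reverse direction.} Assume $\sum_{s}\im(\alpha_s-\id)=T$ and let $N\neq 0$ be an ideal of $B$; I must show $N=B$. Step 1: show $N\cap(T\times\{0\})\neq 0$. Take $0\neq(t,s)\in N$. If $s\neq 0$, then since $S$ is simple and $N/(N\cap(T\times\{0\}))$ embeds in $B/(T\times\{0\})\cong S$ as an ideal, that image is all of $S$; then for any $s'\in S$ pick $(t'',s')\in N$ and use the commutator/$*$-operation against $\{0\}\times S$ and against $T\times\{0\}$ to push into $T\times\{0\}$. Concretely, for $(t',0)\in B$ and $(t'',s')\in N$, compute $(\lambda_{(t',0)}-\id)(t'',s')\in N$; by the lambda formula with $\lambda_0=\id$ this equals $(t''-t'',\,-b(t'',t'))=(0,-b(t'',t'))\in N$, so $\{0\}\times b(T,\pi_T(N))\subseteq N$ where $\pi_T$ is projection to $T$ — wait that gives elements of $\{0\}\times S$, not $T\times\{0\}$. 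Reversing: apply $(\lambda_{(t'',s')}-\id)$ to $(t',0)\in B$: this gives $(\alpha_{s'}(t')-t',\,-b(\alpha_{s'}(t'),t''))\in N$. As $s'$ ranges over $S$ (which we've arranged, or at least over enough of $S$) and $t'$ over $T$, the first coordinates sweep out $\sum_{s'}\im(\alpha_{s'}-\id)=T$. Summing several such elements of $N$: $\sum_i(\alpha_{s_i'}(t_i')-t_i',\,*)$ — but the second coordinates are messy. The key trick: having shown $\{0\}\times(\text{something})\subseteq N$ first will let us clean up. Step 2 (the heart): once $N$ contains $\{0\}\times J$ for a suitable additive subgroup/left-ideal $J$ of $S$, combine with Step 1's elements; since $S$ is simple and $\{0\}\times S$ is a left ideal, I expect to show $\{0\}\times S\subseteq N$, then $N\supseteq T\times\{0\}$ via the sweeping argument above with clean second coordinates, hence $N=B$. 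Step 3: the degenerate subcase where every element of $N$ has $S$-coordinate $0$: then $N\subseteq T\times\{0\}$, $N$ is a $\lambda$-invariant additive subgroup, and the second-coordinate part of $\lambda_{(t_1,s_1)}(t,0)=(\alpha_{s_1}(t),-b(\alpha_{s_1}(t),t_1))$ must lie in $N$, forcing $b(\alpha_{s_1}(t),t_1)=0$ for all $t_1$, i.e. (since $\alpha_{s_1}$ is an automorphism) $b(t',t_1)=0$ for all $t_1$ and all $t'$ in the $\alpha$-saturation of $\pi_T(N)$; by the standing hypothesis this forces $\pi_T(N)=0$, i.e. $N=0$, contradiction. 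So this subcase cannot occur, meaning every nonzero ideal has an element with nonzero $S$-part, and Steps 1–2 finish it.

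\medskip

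The main obstacle, in both directions, is bookkeeping the second ($S$-valued, $b$-twisted) coordinates when summing elements of $N$ obtained from the sweeping $\sum_s\im(\alpha_s-\id)=T$: the individual elements $(\alpha_s(t')-t',\,-b(\alpha_s(t'),t''))\in N$ have coupled coordinates, and one must first establish $\{0\}\times S\subseteq N$ (using simplicity of $S$ applied to the left ideal $\{0\}\times S$ intersected with $N$, after producing one nonzero element there) so that the $S$-coordinates can be absorbed freely; only then does the first-coordinate sweep cleanly give $T\times\{0\}\subseteq N$. Dispatching the "$N$ entirely inside $T\times\{0\}$" case is exactly where the cocycle nondegeneracy hypothesis "for every $t\neq0$ some $b(t,t')\neq0$" gets used, and making the automorphism-saturation step precise there is the second delicate point.
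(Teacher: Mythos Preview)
Your reverse direction is essentially the paper's argument, though you hedge where the paper is direct. Concretely: from any nonzero $(t,s)\in N$ the paper produces a nonzero element of $N\cap(\{0\}\times S)$ in one step---if $s=0$ it uses $(\lambda_{(t,0)}-\id)(t',0)=(0,-b(t,t'))$ and nondegeneracy; if $s\neq0$ it uses that $S$ simple nontrivial forces $\soc(S)=0$, so some $s'$ has $\lambda_s(s')\neq s'$, and then $(\lambda_{(t,s)}-\id)(0,s')=(0,\lambda_s(s')-s')$. Then $N\cap(\{0\}\times S)$ is a nonzero ideal of the simple subbrace $\{0\}\times S\cong S$, so $\{0\}\times S\subseteq N$; after that the sweep $(\lambda_{(0,y)}-\id)(x,z)=((\alpha_y-\id)(x),*)$ places $T'\times\{0\}\subseteq N$ (the $*$ is absorbed because $\{0\}\times S\subseteq N$), where $T'=\sum_s\im(\alpha_s-\id)$. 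Your Step~3 and your various computations are circling exactly this, so no new idea is missing here.

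Your forward direction, however, has a genuine gap. You try $U\times\{0\}$, correctly observe it is not $\lambda$-invariant, then try $T\times\{0\}$, which is also not an ideal for the same reason (the second coordinate of $\lambda_{(t_1,s_1)}(t,0)$ is $-b(\alpha_{s_1}(t),t_1)$, generally nonzero). The correct candidate---and the one the paper uses---is $T'\times S$ with $T'=\sum_{s}\im(\alpha_s-\id)$. This \emph{is} an ideal: it is an additive subgroup (the $b$-twist in the sum lands in $S$, which is fully present); it is $\lambda$-invariant because $\alpha_y\circ(\alpha_z-\id)=(\alpha_{yzy^{-1}}-\id)\circ\alpha_y$ gives $\alpha_y(T')\subseteq T'$; and it is normal because $(x,y)(x',y')(x,y)^{-1}=(x+\alpha_y(x')-\alpha_{yy'y^{-1}}(x),\,yy'y^{-1})$ has first coordinate in $T'$ (write $x-\alpha_{yy'y^{-1}}(x)=-(\alpha_{yy'y^{-1}}-\id)(x)$). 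Since $T'\times S\supseteq\{0\}\times S$ is nonzero, if $T'\neq T$ it is a proper nonzero ideal, contradicting simplicity. In fact the paper organizes the whole proof around this: it shows simultaneously that every nonzero ideal contains $T'\times S$ and that $T'\times S$ is itself an ideal, so $T'\times S$ is the unique minimal nonzero ideal and simplicity is equivalent to $T'=T$.
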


\begin{proof}
Let $I$ be a non-zero ideal of $T\rtimes_{\circ} S$. Let $(t,s)\in
I$ be a nonzero element. Suppose that $s=0$. Then $t\neq 0$. There
exists $t'\in T$ such that $b(t,t')\neq 0$, and we have
\begin{eqnarray*}
(\lambda_{(t,0)}-\id)(t',0)&=&(t,0)(t',0)-(t,0)-(t',0)\\
&=&(t+t',0)-(t+t',b(t,t'))\\
&=&(0,-b(t,t'))\in I.
\end{eqnarray*}
Hence we may assume that $s\neq 0$. Since $S$ is a simple
non-trivial left brace, $\soc(S)=\{ 0\}$. So there exists $s'\in S$
such that $\lambda_s(s')\neq s'$. Note that
\begin{eqnarray*}
(\lambda_{(t,s)}-\id)(0,s')&=&(t,s)(0,s')-(t,s)-(0,s')\\
&=&(t,ss')-(t,s+s')\\
&=&(0,\lambda_s(s')-s')\in I\setminus\{ (0,0)\}.
\end{eqnarray*}
Since $S$ is a simple left brace, we easily get that $\{ (0,y)\mid
y\in S\}\subseteq I$. Let $x\in T$ and $y,z\in S$. We have
\begin{eqnarray*}
(\lambda_{(0,y)}-\id)(x,z)&=&(0,y)(x,z)-(0,y)-(x,z)\\
&=&(\alpha_y(x),yz)-(x,y+z)\\
&=&((\alpha_y-\id)(x),yz-y-z-b(x,(\alpha_y-\id)(x)))\in I
\end{eqnarray*}
and thus $((\alpha_y-\id)(x),0)\in I$, because $\{ (0,y)\mid y\in
S\}\subseteq I$.

Let $T'=\sum_{y\in S}\im(\alpha_y-\id)$. We have that $T'\times
S\subseteq I$. It is clear that $T'\times S$ is a subgroup of the
additive group of $T\rtimes_{\circ} S$. Let $(x,y)\in
T\rtimes_{\circ} S$ and $(x',y')\in T'\times S$. Note that
\begin{eqnarray*}
\lambda_{(x,y)}(x',y')&=&(x,y)(x',y')-(x,y)\\
&=&(x+\alpha_y(x'),yy')-(x,y)\\
&=&(\alpha_y(x'),yy'-y-b(x,\alpha_y(x')))
\end{eqnarray*}
and $\alpha_y\circ(\alpha_z-\id)=(\alpha_{yzy^{-1}}-\id)\circ
\alpha_y$. Therefore $\lambda_{(x,y)}(x',y')\in T'\times S$.
Furthermore
\begin{eqnarray*}
(x,y)(x',y')(x,y)^{-1}&=&(x+\alpha_y(x'),yy')(-\alpha_y^{-1}(x),y^{-1})\\
&=&(x+\alpha_y(x')-\alpha_{yy'y^{-1}}(x),yy'y^{-1})\in T'\times S.
\end{eqnarray*}
Hence $T'\times S$ is an ideal of $T\rtimes_{\circ} S$. Thus
$T\rtimes_{\circ} S$ is simple if and only if $T'=T$, and the result
follows.
\end{proof}

\section{Wreath products of left braces}\label{wreathprod}
In this section we recall the definition of the wreath product of
left braces (see \cite[Corollary~1]{CJO2}) and we study how to
construct automorphisms of wreath products of left braces. The aim
is to build new classes of left braces via the iterated wreath
product construction.

Let $G_1$ and $G_2$ be left braces. Recall that the wreath product
$G_2\wr G_1$ of the left braces $G_2$ and $G_1$ is a left brace
which is the semidirect product of left braces $H_2\rtimes G_1$,
where $H_2=\{ f\colon G_1\longrightarrow G_2\mid |\{g\in G_1\mid
f(g)\neq 1\}|<\infty\}$ is a left brace with the operations
$(f_1\cdot f_2)(g)=f_1(g)\cdot f_2(g)$ and
$(f_1+f_2)(g)=f_1(g)+f_2(g)$, for all $f_1,f_2\in H_2$ and $g\in
G_1$, and the action of $(G_1,\cdot)$ on $H_2$ is given by the
homomorphism $\sigma\colon (G_1,\cdot)\longrightarrow \Aut(H_2,
+,\cdot)$ defined by $\sigma(g)(f)(x)=f(g^{-1}x)$, for all $g,x\in
G_1$ and $f\in H_2$. We will denote $\sigma(g)(f)$ simply by
$g(f)$.

As in group theory  (see \cite[Hilfssatz~I.15.7(b)]{Huppert}), one
can extend every automorphism of the left brace $G_1$ to an
automorphism of $G_2\wr G_1$ in a natural way.

\begin{proposition}\label{wreath}
Let $G_1$ and $G_2$ be two left braces. Let $\alpha_1\in
\Aut(G_1,+,\cdot)$. Let $H_2=\{ f\colon G_1\longrightarrow G_2\mid
|\{g\in G_1\mid f(g)\neq 1\}|<\infty\}$. Let
$\widehat{\alpha}_1\colon H_2\longrightarrow H_2$ be the map defined
by $\widehat{\alpha}_1(f)(g)=f(\alpha_1^{-1}(g))$, for all $f\in
H_2$ and all $g\in G_1$. Then the map $\alpha_2\colon G_2\wr
G_1\longrightarrow G_2\wr G_1$ defined by
$\alpha_2(f,g)=(\widehat{\alpha}_1(f),\alpha_1(g))$, for all $f\in
H_2$ and all $g\in G_1$, is an automorphism of left braces.
\end{proposition}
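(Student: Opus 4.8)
The plan is to verify directly that $\alpha_2$ respects both operations of the wreath product $G_2\wr G_1$ and that it is bijective. Recall that $G_2\wr G_1 = H_2\rtimes G_1$ as a semidirect product of left braces, so its addition is $(f_1,g_1)+(f_2,g_2)=(f_1+f_2,g_1+g_2)$ (componentwise, since the additive semidirect product of braces is just the direct sum of additive groups) and its multiplication is $(f_1,g_1)\cdot(f_2,g_2)=(f_1\cdot g_1(f_2),\,g_1\cdot g_2)$, where $g_1(f_2)=\sigma(g_1)(f_2)$ is given by $g_1(f_2)(x)=f_2(g_1^{-1}x)$. So I must check that $\widehat{\alpha}_1$ is an automorphism of the left brace $H_2$, that $\alpha_1$ is an automorphism of $G_1$ (given), and crucially that $\widehat{\alpha}_1$ intertwines the two actions in the right way, namely $\widehat{\alpha}_1(g(f)) = \alpha_1(g)(\widehat{\alpha}_1(f))$ for all $g\in G_1$, $f\in H_2$.

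First I would check that $\widehat{\alpha}_1$ preserves $+$ and $\cdot$ on $H_2$: for $\circ\in\{+,\cdot\}$ we have $\widehat{\alpha}_1(f_1\circ f_2)(g) = (f_1\circ f_2)(\alpha_1^{-1}(g)) = f_1(\alpha_1^{-1}(g))\circ f_2(\alpha_1^{-1}(g)) = (\widehat{\alpha}_1(f_1)\circ\widehat{\alpha}_1(f_2))(g)$, using the pointwise definition of the operations on $H_2$; bijectivity of $\widehat{\alpha}_1$ is immediate with inverse $\widehat{\alpha_1^{-1}}$ since $\alpha_1$ is bijective, and one checks the support condition is preserved because $\alpha_1^{-1}$ is a bijection of $G_1$. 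Next, the intertwining identity: for $g,x\in G_1$,
\begin{align*}
\widehat{\alpha}_1(g(f))(x) &= (g(f))(\alpha_1^{-1}(x)) = f(g^{-1}\alpha_1^{-1}(x)),\\
\bigl(\alpha_1(g)(\widehat{\alpha}_1(f))\bigr)(x) &= \widehat{\alpha}_1(f)(\alpha_1(g)^{-1}x) = f(\alpha_1^{-1}(\alpha_1(g)^{-1}x)) = f(g^{-1}\alpha_1^{-1}(x)),
\end{align*}
where the last step uses that $\alpha_1$ is a homomorphism of $(G_1,\cdot)$, so $\alpha_1^{-1}$ is too and $\alpha_1^{-1}(\alpha_1(g)^{-1}x)=\alpha_1^{-1}(\alpha_1(g))^{-1}\alpha_1^{-1}(x)=g^{-1}\alpha_1^{-1}(x)$. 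Hence the two agree.

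With these facts in hand, the verification that $\alpha_2$ is additive and multiplicative is routine semidirect-product bookkeeping. For addition: $\alpha_2((f_1,g_1)+(f_2,g_2)) = \alpha_2(f_1+f_2,g_1+g_2) = (\widehat{\alpha}_1(f_1+f_2),\alpha_1(g_1+g_2)) = (\widehat{\alpha}_1(f_1)+\widehat{\alpha}_1(f_2),\alpha_1(g_1)+\alpha_1(g_2)) = \alpha_2(f_1,g_1)+\alpha_2(f_2,g_2)$. For multiplication: $\alpha_2((f_1,g_1)\cdot(f_2,g_2)) = \alpha_2(f_1\cdot g_1(f_2),\,g_1 g_2) = (\widehat{\alpha}_1(f_1\cdot g_1(f_2)),\,\alpha_1(g_1 g_2)) = (\widehat{\alpha}_1(f_1)\cdot\widehat{\alpha}_1(g_1(f_2)),\,\alpha_1(g_1)\alpha_1(g_2))$, and applying the intertwining identity this equals $(\widehat{\alpha}_1(f_1)\cdot\alpha_1(g_1)(\widehat{\alpha}_1(f_2)),\,\alpha_1(g_1)\alpha_1(g_2)) = (\widehat{\alpha}_1(f_1),\alpha_1(g_1))\cdot(\widehat{\alpha}_1(f_2),\alpha_1(g_2)) = \alpha_2(f_1,g_1)\cdot\alpha_2(f_2,g_2)$. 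Finally, $\alpha_2$ is bijective: the map $(f,g)\mapsto(\widehat{\alpha_1^{-1}}(f),\alpha_1^{-1}(g))$ is a two-sided inverse. Therefore $\alpha_2\in\Aut(G_2\wr G_1,+,\cdot)$.

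I do not expect a serious obstacle here; the only point requiring care is the correct direction of the intertwining identity between $\widehat{\alpha}_1$ and the action $\sigma$ — it is easy to get an inverse on the wrong side — and one must be attentive that $\alpha_1$ being an automorphism of $(G_1,\cdot)$ is genuinely used (not just additivity) when pushing $\alpha_1^{-1}$ through the product $\alpha_1(g)^{-1}x$. This mirrors exactly the classical group-theoretic argument referenced via \cite[Hilfssatz~I.15.7(b)]{Huppert}, the only new feature being that one must check compatibility with \emph{both} brace operations simultaneously, which goes through because all the maps in sight ($\alpha_1$, its inverse, evaluation at points of $G_1$) are simultaneously additive and multiplicative.
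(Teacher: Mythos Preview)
Your proof is correct and is exactly the straightforward verification that the paper has in mind: the paper's own ``proof'' merely says that the argument is analogous to \cite[Hilfssatz~I.15.7(b)]{Huppert} and leaves the details to the reader, so you have in fact supplied precisely those details. The one substantive step---the intertwining identity $\widehat{\alpha}_1(g(f))=\alpha_1(g)(\widehat{\alpha}_1(f))$---is handled correctly, and your remark that multiplicativity (not just additivity) of $\alpha_1$ is what makes this work is well placed.
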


\begin{proof}
The proof is similar to the proof of
\cite[Hilfssatz~I.15.7(b)]{Huppert} and we leave it to the reader.
\end{proof}

In the situation described above, we say that $\alpha_2$ is
the automorphism of $G_2\wr G_1$ induced by the automorphism
$\alpha_1$ of $G_1$. Thus, given left braces $G_1,G_2,\dots ,G_n$,
every automorphism $\alpha_1$ of the left brace $G_1$, induces an
automorphism $\alpha_n$ of the left brace
$$G_n\wr(G_{n-1}\wr\cdots(G_2\wr G_1)\cdots ),$$
where $\alpha_i$ is the automorphism of
$G_i\wr(G_{i-1}\wr\cdots(G_2\wr G_1)\cdots )$ induced by the
automorphism $\alpha_{i-1}$ of $G_{i-1}\wr(G_{i-2}\wr\cdots(G_2\wr
G_1)\cdots )$ (as in Proposition~\ref{wreath}), for all $n\geq i\geq
2$.

\begin{proposition}\label{composition}
Let $G_1$ and $G_2$ be two left braces. Let $\alpha_1,\beta_1\in
\Aut(G_1,+,\cdot)$. Let $\alpha_2,\beta_2$ be the automorphisms of
the left brace $G_2\wr G_1$ induced by $\alpha_1,\beta_1$,
respectively. Then the automorphism of $G_2\wr G_1$ induced
by $\alpha_1\circ\beta_1$ is $\alpha_2\circ\beta_2$.
\end{proposition}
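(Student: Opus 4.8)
The plan is to unwind the semidirect-product description $G_2\wr G_1=H_2\rtimes G_1$ from the text and compare the two coordinates of a pair $(f,g)$ separately. Set $\gamma_1=\alpha_1\circ\beta_1\in\Aut(G_1,+,\cdot)$ and let $\gamma_2$ be the automorphism of $G_2\wr G_1$ induced by $\gamma_1$ as in Proposition~\ref{wreath}, so that $\gamma_2(f,g)=(\widehat{\gamma}_1(f),\gamma_1(g))$ with $\widehat{\gamma}_1(f)(x)=f(\gamma_1^{-1}(x))$ for all $f\in H_2$ and $x\in G_1$. The goal is then precisely to check $\gamma_2=\alpha_2\circ\beta_2$, which I would do by evaluating $(\alpha_2\circ\beta_2)(f,g)=\alpha_2(\widehat{\beta}_1(f),\beta_1(g))=(\widehat{\alpha}_1(\widehat{\beta}_1(f)),\,\alpha_1(\beta_1(g)))$ and matching it with $\gamma_2(f,g)$.

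For the second coordinate there is nothing to prove: $\alpha_1(\beta_1(g))=\gamma_1(g)$ by the definition of composition of maps. For the first coordinate, the key identity to establish is
\[
\widehat{\alpha}_1\circ\widehat{\beta}_1=\widehat{\gamma}_1 \qquad\text{as maps } H_2\longrightarrow H_2 .
\]
I would verify this pointwise: for $f\in H_2$ and $x\in G_1$,
\[
\bigl(\widehat{\alpha}_1(\widehat{\beta}_1(f))\bigr)(x)=\bigl(\widehat{\beta}_1(f)\bigr)(\alpha_1^{-1}(x))=f\bigl(\beta_1^{-1}(\alpha_1^{-1}(x))\bigr)=f\bigl((\alpha_1\circ\beta_1)^{-1}(x)\bigr)=\bigl(\widehat{\gamma}_1(f)\bigr)(x),
\]
where the only nontrivial point is the order reversal $(\alpha_1\circ\beta_1)^{-1}=\beta_1^{-1}\circ\alpha_1^{-1}$. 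Combining the two coordinates gives $(\alpha_2\circ\beta_2)(f,g)=(\widehat{\gamma}_1(f),\gamma_1(g))=\gamma_2(f,g)$ for all $(f,g)\in G_2\wr G_1$, as required.

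I do not expect a genuine obstacle here; the content is entirely bookkeeping. The one subtlety worth flagging is exactly this order reversal under inversion, which is what makes the assignment $\alpha_1\mapsto\widehat{\alpha}_1$ covariant despite the $\alpha_1^{-1}$ appearing in its definition. It would also be appropriate to remark in passing that $\widehat{\gamma}_1(f)$ again has finite support (its support is $\gamma_1$ applied to the support of $f$), so that all the maps above indeed take values in $H_2$; but this is already part of Proposition~\ref{wreath}, so it need not be re-argued.
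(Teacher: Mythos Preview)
Your proof is correct and follows essentially the same route as the paper: both arguments reduce to the pointwise identity $\widehat{\alpha}_1\circ\widehat{\beta}_1=\widehat{\gamma}_1$, verified by applying to $f\in H_2$ and $x\in G_1$ and using $(\alpha_1\circ\beta_1)^{-1}=\beta_1^{-1}\circ\alpha_1^{-1}$. Your added remarks on the covariance of $\alpha_1\mapsto\widehat{\alpha}_1$ and on finite support are accurate and do not alter the argument.
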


\begin{proof}
Let $\gamma_2$ be the automorphism of $G_2\wr G_1$ induced by
$\gamma_1=\alpha_1\circ\beta_1$. Let $H_2=\{ f\colon
G_1\longrightarrow G_2\mid |\{g\in G_1\mid f(g)\neq 1\}|<\infty\}$.
Let $\widehat{\gamma}_1\colon H_2\longrightarrow H_2$ be the map
defined by $\widehat{\gamma}_1(f)(g)=f(\gamma_1^{-1}(g))$, for all
$f\in H_2$ and all $g\in G_1$. We have that
\begin{eqnarray*}
&&\gamma_2(f,g)=(\widehat{\gamma}_1(f),\gamma_1(g)),
\end{eqnarray*}
for all $f\in H_2$ and all $g\in G_1$. Note that
\begin{eqnarray*}
\widehat{\gamma}_1(f)(h)&=&f(\beta_1^{-1}(\alpha_1^{-1}(h)))\\
&=&\widehat{\beta}_1(f)(\alpha_1^{-1}(h))\\
&=&\widehat{\alpha}_1(\widehat{\beta}_1(f))(h),
\end{eqnarray*}
for all $f\in H_2$ and all $h\in G_1$. Hence
$\widehat{\gamma}_1=\widehat{\alpha}_1\circ\widehat{\beta}_1$. Now
it is clear that $\gamma_2=\alpha_2\circ\beta_2$ and the result
follows.
\end{proof}

\begin{lemma}\label{semidirect}
Let $G$ be a finite group and let $K$ be a field of characteristic
$p$. Suppose that $p$ is not a divisor of the order of $G$. Consider
the semidirect product $K[G]\rtimes G$ with respect to the action of
$G$ on the additive group of $K[G]$ by left multiplication.  Let
$D_0=G$ and, for $m>0$, let $D_m=[D_{m-1},D_{m-1}]$. Let
$E_0=K[G]\rtimes G$ and, for $m>0$, let $E_m=[E_{m-1},E_{m-1}]$.
 Then
\begin{equation}\label{claim}
E_m=\omega(K[D_{m-1}])K[G]\rtimes D_{m},
\end{equation}
for all $m>0$. (Here $\omega(K[G])$ denotes the augmentation ideal
of the group algebra $K[G]$.) In particular, the derived length of $E_0$ is $d+1$, where $d$ is the derived length of $G$.
\end{lemma}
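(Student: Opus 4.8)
The plan is to prove the formula \eqref{claim} by induction on $m$, working inside the semidirect product $E_0 = K[G] \rtimes G$ where the group operation is $(u, g)(v, h) = (u + g(v), gh)$ with $g(v)$ denoting left multiplication by $g$ in $K[G]$. The base case $m=1$ asks that $E_1 = [E_0, E_0] = \omega(K[G])K[G] \rtimes D_1 = \omega(K[G]) \rtimes [G,G]$; here I would compute a general commutator $[(u,g),(v,h)]$ directly, observe that its $G$-component lies in $[G,G]$ and its $K[G]$-component is a sum of terms of the form $(g - \id)(\text{something})$ plus terms involving $(h-\id)$, hence lies in the augmentation ideal $\omega(K[G])$; conversely I would show every element of $\omega(K[G]) \rtimes [G,G]$ is a product of such commutators, using that $\omega(K[G])$ is additively generated by elements $g - 1$ and that these arise as the $K[G]$-part of $[(0,g),(1,1)]$-type commutators (where $1 \in K[G]$ is the identity group element viewed as a basis vector), together with commutators $[(0,g),(0,h)] = (0, [g,h])$ landing in $[G,G]$. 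A mild subtlety: the $K[G]$-component of a commutator is not simply additive in the generators, so I would instead show that the normal closure / the subgroup generated by all commutators equals the claimed set, exploiting that $\omega(K[G])K[G] = \omega(K[G])$ is a two-sided ideal stable under the $G$-action.

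For the inductive step, assume $E_{m-1} = \omega(K[D_{m-2}])K[G] \rtimes D_{m-1}$ (with the convention $D_{-1} = G$ so this also covers passing from $m=1$ to $m=2$). I need $[E_{m-1}, E_{m-1}] = \omega(K[D_{m-1}])K[G] \rtimes D_m$. The group-component computation is immediate: projecting to $G$, the commutator subgroup of $\omega(K[D_{m-2}])K[G] \rtimes D_{m-1}$ surjects onto $[D_{m-1}, D_{m-1}] = D_m$. For the $K[G]$-component I would again compute $[(u, g),(v,h)]$ for $u, v \in \omega(K[D_{m-2}])K[G]$ and $g, h \in D_{m-1}$; the key identity is that the $K[G]$-part equals $u + g(v) - gh g^{-1}(\ldots)$, and expanding this modulo the $D_m$-twist produces terms $(g - \id)(v)$ for $g \in D_{m-1}$ — and since $v$ already lies in the ideal $\omega(K[D_{m-2}])K[G]$, while $(g-1)$ for $g \in D_{m-1}$ generates $\omega(K[D_{m-1}])$ as a left ideal, these terms land in $\omega(K[D_{m-1}])K[G]$. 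Conversely, elements $(h - 1)$ with $h \in D_{m-1}$ are realized as $K[G]$-components of commutators $[(0, h), (1, 1)]$ — but here I must be careful that $(0,h)$ and $(1,1)$ actually lie in $E_{m-1}$; they do once one checks $1 - h^{-1} \in \omega(K[D_{m-2}])K[G]$ for $h \in D_{m-1} \subseteq D_{m-2}$, which holds since $D_{m-1} \leq D_{m-2}$. Multiplying and conjugating such commutators, and using that $\omega(K[D_{m-1}])K[G]$ is an ideal stable under $D_m$, gives the reverse inclusion.

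The main obstacle, and the place where the hypothesis $p \nmid |G|$ is essential, is the reverse inclusion: I must check that the set $\omega(K[D_{m-1}])K[G] \rtimes D_m$ is not larger than necessary and that it is actually \emph{all} generated by commutators rather than just containing them. The concern is whether the additive group $\omega(K[D_{m-1}])K[G]$ is exhausted by the $K[G]$-components that appear; a priori these components generate only a subgroup, and one needs that in characteristic $p$ coprime to $|G|$ the relevant module is semisimple (Maschke's theorem), so that $\omega(K[D_{m-1}])$ decomposes nicely and every element is reachable. Concretely, I expect to invoke that $K[D_{m-1}]$ is semisimple, hence $\omega(K[D_{m-1}])$ is a direct summand complemented by $K \cdot (\sum_{g \in D_{m-1}} g)$, and use an averaging/idempotent argument to solve the relevant equations. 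I would then verify that the formula is self-consistent: applying it with $m$ replaced by $m+1$ to $E_m = \omega(K[D_{m-1}])K[G] \rtimes D_m$ reproduces $E_{m+1} = \omega(K[D_m])K[G] \rtimes D_{m+1}$, closing the induction. Finally, the statement about derived length follows: $E_m = 0$ in the brace/group sense would require $D_m = 1$ and $\omega(K[D_{m-1}]) K[G] = 0$; since $D_{d} = 1$ but $D_{d-1} \neq 1$ (where $d$ is the derived length of $G$), we get $E_d = \omega(K[D_{d-1}])K[G] \rtimes 1 \neq 0$ because $D_{d-1} \neq 1$ forces $\omega(K[D_{d-1}]) \neq 0$, whereas $E_{d+1} = \omega(K[D_d])K[G] \rtimes D_{d+1} = \omega(K) K[G] \rtimes 1 = 0$, so the derived length of $E_0$ is exactly $d+1$.
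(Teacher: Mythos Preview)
Your overall inductive strategy matches the paper's, but there is a genuine gap in the inductive step for the inclusion $\omega(K[D_{m-1}])K[G]\rtimes D_m\subseteq E_m$. You propose to realize $(h-1,1)$ as the commutator $[(0,h),(1,1)]$ with $h\in D_{m-1}$, and then assert that $(1,1)\in E_{m-1}$. This is false: membership in $E_{m-1}=\omega(K[D_{m-2}])K[G]\rtimes D_{m-1}$ requires the $K[G]$-component to lie in the proper ideal $\omega(K[D_{m-2}])K[G]$, and the identity element $1\in K[G]$ does not lie there (apply the augmentation of $K[D_{m-2}]$, extended $K[G]$-linearly). The check you offer, that $1-h^{-1}\in\omega(K[D_{m-2}])K[G]$, is correct but irrelevant; it says nothing about $(1,1)$.

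The paper repairs this by taking commutators only of elements that are genuinely in $E_{m-1}$: for $\alpha\in\omega(K[D_{m-2}])K[G]$ and $h\in D_{m-1}$ one has
\[
((1-h)\alpha,1)=(\alpha,h^{-1})^{-1}(0,h)^{-1}(\alpha,h^{-1})(0,h),
\]
with both factors in $E_{m-1}$. But this yields only $\omega(K[D_{m-1}])\cdot\omega(K[D_{m-2}])K[G]\times\{1\}\subseteq E_m$, not all of $\omega(K[D_{m-1}])K[G]\times\{1\}$. The place where Maschke actually enters is the identity $\omega(K[D_{m-1}])K[D_{m-2}]=\omega(K[D_{m-1}])\omega(K[D_{m-2}])$: writing $K[D_{m-2}]=Ke\oplus\omega(K[D_{m-2}])$ with $e=|D_{m-2}|^{-1}\sum_{g\in D_{m-2}}g$ (using $p\nmid|G|$), one has $\omega(K[D_{m-1}])e=0$ because $D_{m-1}\subseteq D_{m-2}$; multiplying on the right by $K[G]$ then gives $\omega(K[D_{m-1}])K[G]=\omega(K[D_{m-1}])\omega(K[D_{m-2}])K[G]$, closing the inclusion. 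Your invocation of semisimplicity is aimed in the right direction, but your concrete commutator choice sidesteps exactly the computation in which the hypothesis $p\nmid|G|$ does its work.
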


\begin{proof}
We shall prove the result by induction on $m$.
For $m=1$, we have
that
$$((1-h)\alpha,1)=(\alpha, h^{-1})^{-1}(0,h)^{-1}(\alpha,h^{-1})(0,h),$$
for all $h\in G$ and $\alpha\in K[G]$, and
$$(0,h^{-1}g^{-1}hg)=(0,h)^{-1}(0,g)^{-1}(0,h)(0,g).$$ Hence
$$\omega(K[G])\rtimes D_{1}\subseteq E_1.$$
On the other hand
\begin{eqnarray*}
\lefteqn{(\alpha,h)^{-1}(\beta,g)^{-1}(\alpha,h)(\beta,g)}\\
&=&(-h^{-1}\alpha,h^{-1})(-g^{-1}\beta,g^{-1})(\alpha+h\beta,hg)\\
&=&(-h^{-1}\alpha-h^{-1}g^{-1}\beta,h^{-1}g^{-1})(\alpha+h\beta,hg)\\
&=&(-h^{-1}\alpha-h^{-1}g^{-1}\beta+h^{-1}g^{-1}(\alpha+h\beta),h^{-1}g^{-1}hg)\\
&=&((h^{-1}g^{-1}-h^{-1})\alpha+(h^{-1}g^{-1}h-h^{-1}g^{-1})\beta,h^{-1}g^{-1}hg),
\end{eqnarray*}
for all $\alpha,\beta\in K[G]$ and all $h,g\in G$. Hence
$E_1\subseteq \omega(K[G])\rtimes D_{1}$. Therefore $E_1=
\omega(K[G])\rtimes D_{1}$.

Suppose that $m>1$ and that (\ref{claim}) holds for $m-1$. Let
$\alpha\in \omega(K[D_{m-2}])K[G]$. We have
$$((1-h)\alpha,1)=(\alpha,
h^{-1})^{-1}(0,h)^{-1}(\alpha,h^{-1})(0,h),$$ for all $h\in
D_{m-1}$. Thus
\begin{eqnarray*}\lefteqn{\omega(K[D_{m-1}])\omega(K[D_{m-2}])K[G]\times\{ 1\} }\\
&&\subseteq [\omega(K[D_{m-2}])K[G]\rtimes
D_{m-1},\omega(K[D_{m-2}])K[G]\rtimes D_{m-1}].
\end{eqnarray*}
Since $p$ is not a divisor of $|G|$, the group algebras $K[D_{j}]$
are semisimple by Maschke's Theorem, for all non-negative integers
$j$. Hence $$\omega(K[D_{m-1}])K[D_{m-2}]=
\omega(K[D_{m-1}])\omega(K[D_{m-2}]),$$ and consequently
$$\omega(K[D_{m-1}])K[G] = \omega(K[D_{m-1}])\omega(K[D_{m-2}])K[G],$$
if $m>1$.
Hence
\begin{eqnarray*}
\lefteqn{\omega(K[D_{m-1}])K[G]\rtimes D_{m}}\\
&\subseteq&
[\omega(K[D_{m-2}])K[G]\rtimes D_{m-1},\omega(K[D_{m-2}])K[G]\rtimes
D_{m-1}]\\
&=&[E_{m-1},E_{m-1}]\quad(\mbox{by the induction hypothesis})\\
&=& E_m.
\end{eqnarray*}
Since
\begin{eqnarray*}\lefteqn{(\alpha,h)^{-1}(\beta,g)^{-1}(\alpha,h)(\beta,g)}\\
&&=((h^{-1}g^{-1}-h^{-1})\alpha+(h^{-1}g^{-1}h-h^{-1}g^{-1})\beta,h^{-1}g^{-1}hg),
\end{eqnarray*}
for all $\alpha,\beta\in \omega(K[D_{m-2}])K[G]$ and all $h,g\in
D_{m-1}$, we also have that
$$E_m\subseteq\omega(K[D_{m-1}])K[G]\rtimes
D_{m},$$ and therefore $E_m=\omega(K[D_{m-1}])K[G]\rtimes D_{m}$.
Thus the result follows.
\end{proof}

\section{New simple left braces}\label{new}

In \cite{B3}, Bachiller gave the first examples of non-trivial
finite simple left braces.  They are  of order $p_1^{n_1}p_2$ for
any pair of primes $p_1,p_2$  such that $p_1$ divides $p_2-1$, and
$n_1$ is a positive integer that depends on $p_2$.
 In \cite{BCJO}, for
every $s>1$ and every set $\{ p_1,\dots , p_s\}$ of $s$ distinct
primes, the authors constructed examples of finite simple left
braces of order $p_1^{n_1}\cdots p_s^{n_s}$ for some positive
integers $n_1,\dots ,n_s$ depending on the $p_i$'s.

In this section we generalize the construction of simple left braces
of \cite{B3}. Moreover, using the iterated wreath product construction, we
give examples of finite simple left
braces with multiplicative group of arbitrary derived length. Note that the
multiplicative groups of all
the previously known finite simple left braces \cite{B3,BCJO} have derived
length at most $3$.

\subsection{Finite simple left braces with multiplicative group of arbitrary derived length}

 Let $R$ be a ring. We will consider the trivial brace associated
to $R$, which is $(R,+,+)$. Let $G$ be a left brace. Note that the
trivial left brace
$$H=\{f\colon G\longrightarrow R\mid |\{
x\in G\mid f(x)\neq 0\}|<\infty\}$$ can be identified with the
trivial left brace associated to the group ring $R[G]$ by
$$f=\sum_{x\in G}f(x)x,$$
for all $f\in H$. We know that the wreath product of left braces
$R\wr G$ is the semidirect product $H\rtimes G$ with respect to
the action defined by $g(f)(x)=f(g^{-1}x)$ for all $x,g\in G$ and
$f\in H$. Note that with the above identification this action is
$$g(f)=\sum_{x\in G}f(g^{-1}x)x=\sum_{x\in G}f(x)gx=g\sum_{x\in G}f(x)x,$$
so it coincides with the left multiplication by $g\in G$ in the
group ring $R[G]$. Thus $R\wr G$ can be identified with the
semidirect product $R[G]\rtimes G$ with respect to the action
given by left multiplication.

Let $n$ be an integer greater than $1$. Let $p_1,\dots ,p_n$ be
distinct prime numbers. Consider the trivial braces associated to
the fields $\mathbb{F}_{p_i}$ of order $p_i$, for $i=1,\dots ,n$.
Sometimes we will need to use the ring product of
$\mathbb{F}_{p_i}$. In order to avoid confusion, we always denote
this product by a dot.

 We shall define recursively two series of left braces $G_j$ and
$\overline{G}_j$, for $j=1,\ldots,n$, in such a way that
$\overline{G}_j$ is defined as a quotient of $G_j$, and that
$G_{j+1}$ is defined as a wreath product. First of all, let
$G_1=\overline{G}_1=\F_{p_1}$. By recurrence we define the
$G_j$'s, for $2\leq j\leq n$, as $G_j=\F_{p_j}\wr
\overline{G}_{j-1}$. For this to make sense, we must define next
$\overline{G}_{j}$ as a quotient of $G_{j}$. Let $H_j=\{ f\colon
\overline{G}_{j-1}\longrightarrow \F_{p_j}\}$ be the set of all
the functions from $\overline{G}_{j-1}$ to $\F_{p_j}$. Since
$\F_{p_j}$ is a trivial brace, so is $H_j$ (for the pointwise
addition of functions). Let $I_j=\{ f\colon
\overline{G}_{j-1}\longrightarrow \F_{p_j}\mid f\text{ is a
constant map} \}$. Clearly $I_j$ is an ideal of $H_j$. By the
definition of wreath product, we know that $G_j=H_j\rtimes
\overline{G}_{j-1}$. Let
$$N_j=\{ (f,0)\in G_j\mid f\in I_j\}.$$
Note that $N_j$ is a central subgroup of the multiplicative group
of the left brace $G_j$. Indeed, for $(f_1,0)\in N_j$ and
$(f_2,g)\in G_j$, we have
$$(f_2,g)(f_1,0)=(f_2+g(f_1),g)=(f_2+f_1,g)=(f_1,0)(f_2,g),$$
where the second equality holds because $f_1$ is a constant map.
Furthermore, since
$$\lambda_{(f_1,0)}(f_2,g)=(f_1,0)(f_2,g)-(f_1,0)=(f_1+f_2,g)- (f_1,0) =(f_2,g),$$
we have that $N_j\subseteq\soc(G_j)$. Thus $N_j$ is an ideal of
the left brace $G_j$. We define $\overline{G}_j=G_j/N_j$. Let
$\overline{H}_j=H_j/I_j$. It is clear that
$\overline{G}_j=\overline{H}_j\rtimes \overline{G}_{j-1}$. Hence
$$\overline{G}_{n}=\overline{H}_{n}\rtimes(\overline{H}_{n-1}\rtimes(\dots\rtimes(\overline{H}_{2}\rtimes \F_{p_1})\dots )),$$
and, therefore, every element in $\overline{G}_{n}$ is of the form
$$(\overline{h}_n,\overline{h}_{n-1},\dots ,\overline{h}_{2},c),$$
where $\overline{h}_{j}\in\overline{H}_{j}$ denotes the class of
$h_j\in H_j$ modulo $I_j$, and $c\in \F_{p_1}$.

Note that $|\overline{G}_1|=|\F_{p_1}|=p_1$ and, for $2\leq j\leq
n$,
$$|\overline{G}_j|=\frac{|G_j|}{|N_j|}=\frac{|\overline{G}_{j-1}|p_j^{|\overline{G}_{j-1}|}}{p_j}.$$
Thus $|\overline{G}_j|=p_1p_2^{m_2}\cdots p_j^{m_j}$, for some
positive integers $m_2,\dots ,m_j$.

\paragraph{} Now we introduce some actions and some bilinear forms
in order to apply the asymmetric product construction to the
braces $\overline{G}_j$. Suppose that $p_j$ is a divisor of
$p_{i}-1$ for all $2\leq j\leq n$ and all $1\leq i<j$. The
existence of such primes is a consequence of Dirichlet's Theorem
on arithmetic progressions. Let $A$ be the trivial brace
associated to the ring $\F_{p_2}\times \dots \times \F_{p_n}$.
Then $a=(1,\dots ,1)$ is a generator of the trivial brace $A$.
Besides the brace structure, $A$ is a ring, and we will use this
structure sometimes. Again we will denote this ring product as a
dot.

Since $p_j$ is a divisor of $p_1-1$, for all $2\leq j\leq n$,
there exists $\gamma\in (\F_{p_1})^*$  of order $p_2\cdots p_n$.
Let $\alpha_1(i a)$ be the automorphism of the left brace
$\F_{p_1}$ defined by $\alpha_1(i a)(z)=\gamma^{i}\cdot z$, for
all $z\in \F_{p_1}$, and $i\in\mathbb{Z}$. Since the additive
order of $a$ is $p_2\cdots p_n$, $\alpha_1(ia)$ is well-defined.
By Proposition~\ref{wreath}, $\alpha_1(ia)$ induces an
automorphism $\alpha_2(ia)$ of the left brace $G_2$. Recall that
$$\alpha_2(ia)(h_2,c)=(\widehat{\alpha}_1(ia)(h_2),\alpha_1(ia)(c)),$$
and
$$\widehat{\alpha}_1(ia)(h_2)(d)=h_2(\alpha_1(ia)^{-1}(d)).$$

It is clear that $\alpha_2(ia)(N_2)=N_2$. Thus $\alpha_2(ia)$
induces an automorphism $\alpha'_2(ia)$ of $\overline{G}_2$. By
Proposition~\ref{composition}, $\alpha_2(ia)=\alpha_2(a)^{i}$ and
we also have $\alpha'_2(ia)=\alpha'_2(a)^{i}$. Hence, the map
$\alpha'_2\colon A\longrightarrow \Aut(\overline{G}_2,+,\cdot)$ is
a morphism of groups. By induction one can see that $\alpha_1(ia)$
induces an automorphism $\alpha'_j(ia)$ of the left brace
$\overline{ G}_j$, for all $2\leq j\leq n$, and the map
$\alpha'_j\colon A\longrightarrow \Aut(\overline{G}_j,+,\cdot)$ is
a morphism of groups.  Note that
$$\alpha'_j(ia)(\overline{h}_j,\overline{g}_{j-1})=
\left(\overline{\widehat{\alpha}_{j-1}(ia)(h_j)}~,~\alpha'_{j-1}(ia)(\overline{g}_{j-1})\right),$$
for all $\overline{h}_j\in \overline{H}_j$ and all
$\overline{g}_{j-1}\in \overline{G}_{j-1}$,  where
$$\widehat{\alpha}_{j-1}(ia)(h_j)(\overline{g})=h_j\left(\alpha'_{j-1}(ia)^{-1}(\overline{g})\right),$$
for all $\overline{g}\in \overline{G}_{j-1}$.

Next we are going to define some bilinear forms. For $f\in H_j$
put $\varepsilon_{j}(f) =\sum_{g\in \overline{G}_{j-1}} f(g)$. Let
$b_j\colon H_j\times H_j\longrightarrow \F_{p_j}$ be the symmetric
bilinear form defined by
$$ b_j(f_1,f_2)=\varepsilon_{j}(f_1)\cdot \varepsilon_{j}(f_2) - \varepsilon_{j}(f_1\bullet f_2),$$
for all $f_1,f_2\in H_j$, where $f_1\bullet f_2\in H_j$ is defined
by $(f_1\bullet f_2)(x)=f_1(x)\cdot f_2(x)$, for all
$x\in\overline{G}_{j-1}$. Recall that $\varepsilon_{j}(f_1)\cdot
\varepsilon_{j}(f_2)$ denotes the ring product of $\F_{p_j}$.

Let $\mathds{1}\colon \overline{G}_{j-1}\longrightarrow \F_{p_j}$
be the map defined by $\mathds{1}(g)=1$, for all $g\in
\overline{G}_{j-1}$. Note that
$$b_j(\mathds{1},f_2)=(|\overline{G}_{j-1}|-1)\varepsilon_{j}(f_2)=0,$$
because $p_j$ is a divisor of $|\overline{G}_{j-1}|-1$.  Thus
$b_j(I_j,H_j)=0$ and then $b_j$ induces a symmetric bilinear form
$b'_j\colon \overline{H}_j\times \overline{ H}_j\longrightarrow
\F_{p_j}$. It is easy to see that $b'_j$ is non-degenerate. We
define  $b\colon \overline{G}_n\times
\overline{G}_n\longrightarrow A$  by
$$b((\overline{h}_n,\overline{h}_{n-1},\dots,\overline{h}_2,z_1),(\overline{h'}_n,\overline{h'}_{n-1},\dots,\overline{h'}_2,z_1'))
=(b'_2(\overline{h}_2,\overline{h'}_2),\dots,b'_n(\overline{h}_n,\overline{h'}_n)),$$
for all $\overline{h}_j,\overline{h'}_j\in \overline{H}_j$, $2\leq
j\leq n$, and all $z_1,z'_1\in \F_{p_1}$. It is easy to see that
$b$ is bilinear.

In order to apply Theorem~\ref{ccs}, we need to check that the
bilinear form $b$ satisfies conditions (\ref{condAlpha}) and
(\ref{condLambda}). As a first step, we will check the analogous
property for $b_j$.

\begin{lemma}\label{pre-bilinear}
Let $h_1,h_2\in H_j$ and $g\in \overline{G}_{j-1}$. Then
$$b_j(h_1,h_2)=b_j(\widehat{\alpha}_{j-1}(a)(h_1)~,~\widehat{\alpha}_{j-1}(a)(h_2)),$$
(recall that $a=(1,\dots ,1)$ is  a generator of the trivial brace
$A$) and
$$b_j(h_1,h_2)=b_j(g(h_1)~,~g(h_2)),$$
 where $g(h)\colon
\overline{G}_{j-1}\longrightarrow \F_{p_j}$ is the map defined by
$(g(h))(g')=h(g^{-1}g')$, for all $g'\in\overline{G}_{j-1}$ and
$h\in H_j$.

\end{lemma}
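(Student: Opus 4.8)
The plan is to verify the two claimed invariance identities for $b_j$ directly from its definition $b_j(f_1,f_2)=\varepsilon_j(f_1)\cdot\varepsilon_j(f_2)-\varepsilon_j(f_1\bullet f_2)$, by showing that each of the two operations in question --- the operator $\widehat{\alpha}_{j-1}(a)$ and the translation $h\mapsto g(h)$ --- acts on $H_j$ as a permutation of the ``coordinates'' indexed by $\overline{G}_{j-1}$. If $\pi\colon\overline{G}_{j-1}\to\overline{G}_{j-1}$ is a bijection and we set $(\pi\cdot h)(x)=h(\pi^{-1}(x))$, then reindexing the sum gives $\varepsilon_j(\pi\cdot h)=\sum_{x}h(\pi^{-1}(x))=\sum_y h(y)=\varepsilon_j(h)$, and similarly $(\pi\cdot h_1)\bullet(\pi\cdot h_2)=\pi\cdot(h_1\bullet h_2)$ because $\bullet$ is pointwise, so $\varepsilon_j((\pi\cdot h_1)\bullet(\pi\cdot h_2))=\varepsilon_j(h_1\bullet h_2)$. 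Hence $b_j(\pi\cdot h_1,\pi\cdot h_2)=b_j(h_1,h_2)$ for any such $\pi$, and both parts of the lemma follow once we identify each operation as being of this form.

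For the second identity this is immediate: by definition $(g(h))(g')=h(g^{-1}g')$, so $g(h)=\pi\cdot h$ with $\pi$ the left-translation $g'\mapsto gg'$ on $\overline{G}_{j-1}$, which is a bijection; apply the computation above. For the first identity I would unwind the definition of $\widehat{\alpha}_{j-1}(a)$: from the recursive setup, $\widehat{\alpha}_{j-1}(ia)(h_j)(\overline{g})=h_j(\alpha'_{j-1}(ia)^{-1}(\overline{g}))$ for all $\overline{g}\in\overline{G}_{j-1}$, so $\widehat{\alpha}_{j-1}(a)(h_j)=\pi\cdot h_j$ with $\pi=\alpha'_{j-1}(a)$. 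Since $\alpha'_{j-1}(a)$ is an automorphism of the left brace $\overline{G}_{j-1}$ (as established just before the lemma), it is in particular a bijection of the underlying set, so again the coordinate-permutation computation applies and gives $b_j(\widehat{\alpha}_{j-1}(a)(h_1),\widehat{\alpha}_{j-1}(a)(h_2))=b_j(h_1,h_2)$.

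I do not expect a serious obstacle here; the only thing requiring a little care is bookkeeping the notation, namely confirming that $\widehat{\alpha}_{j-1}(a)$ really does precompose functions with the \emph{bijection} $\alpha'_{j-1}(a)^{-1}$ rather than doing anything more complicated, and that ``$g(h)$'' in the statement is exactly the wreath-product translation action $g'\mapsto gg'$ already used in the construction. Both are immediate from the formulas displayed in the preceding paragraphs. The mild subtlety worth a sentence in the write-up is why the argument works uniformly in $j$: the only property of the index map used is bijectivity of $\overline{G}_{j-1}$, and $\alpha'_{j-1}(a)$ is a brace automorphism, hence bijective, so no induction on $j$ is actually needed inside this lemma --- the recursion is hidden in the earlier claim that $\alpha'_{j-1}$ is well defined.
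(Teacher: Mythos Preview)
Your proposal is correct and follows essentially the same approach as the paper: both arguments observe that $\widehat{\alpha}_{j-1}(a)$ and $h\mapsto g(h)$ act on $H_j$ by precomposing with a bijection of $\overline{G}_{j-1}$, and that any such ``coordinate permutation'' preserves $\varepsilon_j$ (by reindexing the sum) and commutes with the pointwise product $\bullet$, hence preserves $b_j$. Your version is slightly more explicit in isolating the general fact $(\pi\cdot h_1)\bullet(\pi\cdot h_2)=\pi\cdot(h_1\bullet h_2)$, which the paper leaves implicit in its computation, but the substance is identical.
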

\begin{proof}
The right side of the first equality is by definition
\begin{align*}
b_j(\widehat{\alpha}_{j-1}(a)(h_1),\widehat{\alpha}_{j-1}(a)(h_2))
=\;&\varepsilon_{j}(\widehat{\alpha}_{j-1}(a)(h_1))\cdot \varepsilon_{j}(\widehat{\alpha}_{j-1}(a)(h_2))\\
&- \varepsilon_{j}(\widehat{\alpha}_{j-1}(a)(h_1)\bullet
\widehat{\alpha}_{j-1}(a)(h_2)).
\end{align*}
The action of $\widehat{\alpha}_{j-1}(a)$ over a map $f$ is a
permutation of its images. Thus $\varepsilon_j(f)$, which is the
sum of all the images of $f$, is invariant under this action.
Therefore,
\begin{align*}
b_j(\widehat{\alpha}_{j-1}(a)(h_1),\widehat{\alpha}_{j-1}(a)(h_2))
=\;&\varepsilon_{j}(\widehat{\alpha}_{j-1}(a)(h_1))\cdot \varepsilon_{j}(\widehat{\alpha}_{j-1}(a)(h_2)) \\
&- \varepsilon_{j}(\widehat{\alpha}_{j-1}(a)(h_1)\bullet \widehat{\alpha}_{j-1}(a)(h_2))\\
=\;&\varepsilon_{j}(h_1)\cdot \varepsilon_{j}(h_2) - \varepsilon_{j}(h_1\bullet h_2)\\
=\;& b_j(h_1,h_2).
\end{align*}

The proof of the second equality is analogous, because the action
of $g\in \overline{G}_{j-1}$ over a map is also a permutation of
its images.
\end{proof}

From the previous lemma, we are going to deduce the desired result
for $b$.

\begin{lemma}\label{bilinear}
Let $g_1,g_2,x\in \overline{G}_n$. Then
$$b(g_1,g_2)=b(\alpha'_n(a)(g_1),\alpha'_n(a)(g_2)),$$
and
$$b(g_1,g_2)=b(\lambda_{x}(g_1),\lambda_{x}(g_2)).$$
(Recall that $a=(1,\dots,1)$ is a generator of the trivial brace
$A$.)
\end{lemma}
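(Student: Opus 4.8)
The plan is to reduce both identities for $b$ on $\overline{G}_n$ to the two identities for the forms $b_j$ proved in Lemma~\ref{pre-bilinear}. The key observation is that $b$ was defined componentwise: writing an element of $\overline{G}_n$ as $(\overline{h}_n,\dots,\overline{h}_2,z_1)$, we have $b(g_1,g_2)=(b'_2(\overline{h}_2,\overline{h'}_2),\dots,b'_n(\overline{h}_n,\overline{h'}_n))$. So it suffices to track, for each $j$ with $2\le j\le n$, what the maps $\alpha'_n(a)$ and $\lambda_x$ do to the $j$-th coordinate $\overline{h}_j\in\overline{H}_j$, and then invoke the relevant equality from Lemma~\ref{pre-bilinear} (lifted from $b_j$ on $H_j$ to $b'_j$ on $\overline{H}_j=H_j/I_j$, which is legitimate since $b_j(I_j,H_j)=0$).

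For the first identity, I would use the recursive description of $\alpha'_n(a)$ given just before the lemmas: $\alpha'_j(ia)(\overline{h}_j,\overline{g}_{j-1})=(\overline{\widehat{\alpha}_{j-1}(ia)(h_j)},\,\alpha'_{j-1}(ia)(\overline{g}_{j-1}))$. Unwinding this down the tower, the effect of $\alpha'_n(a)$ on the $j$-th coordinate is precisely $\overline{h}_j\mapsto\overline{\widehat{\alpha}_{j-1}(a)(h_j)}$. The first equality of Lemma~\ref{pre-bilinear} says exactly that $b_j$ is invariant under $\widehat{\alpha}_{j-1}(a)$ in each argument, so $b'_j$ is invariant under $\overline{\widehat{\alpha}_{j-1}(a)(-)}$, and assembling the $n-1$ coordinates gives $b(g_1,g_2)=b(\alpha'_n(a)(g_1),\alpha'_n(a)(g_2))$.

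For the second identity, I first recall that $\overline{G}_n$ is a trivial brace? No — it is not; it is an iterated semidirect product of trivial braces, so its lambda map is genuinely nontrivial. However, the lambda map of a semidirect product acts on the normal-subgroup part by the structural action and by conjugation, and since each $\overline{H}_j$ is a trivial left brace, $\lambda_x$ restricted to the $\overline{H}_j$-layer is of the form $\overline{h}_j\mapsto \overline{g(h_j)}$ for a suitable $g=g(x)\in\overline{G}_{j-1}$ coming from the coordinates of $x$ below level $j$ (i.e. it is left translation by an element of $\overline{G}_{j-1}$, followed possibly by a permutation of images induced by lower-level data). This is exactly the situation covered by the second equality of Lemma~\ref{pre-bilinear}: $b_j(h_1,h_2)=b_j(g(h_1),g(h_2))$, hence $b'_j$ is invariant, and summing over coordinates yields $b(g_1,g_2)=b(\lambda_x(g_1),\lambda_x(g_2))$.

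The main obstacle is the second identity: one must write out the lambda map of the iterated semidirect product $\overline{G}_n=\overline{H}_n\rtimes(\cdots\rtimes\F_{p_1})$ carefully enough to see that on each layer $\overline{H}_j$ it acts by an element of $\overline{G}_{j-1}$ acting as in Lemma~\ref{pre-bilinear} — in particular, to verify that the $\widehat{\alpha}$-type contributions that appear when $x$ has nonzero entries in the ``$A$-direction'' do not actually occur here, since $\lambda_x$ for $x\in\overline{G}_n$ only involves the internal brace structure of $\overline{G}_n$ and not the external action $\alpha'_n$. Once the explicit formula for $\lambda_x$ on each coordinate is in hand, the verification is routine: it is just $n-1$ applications of Lemma~\ref{pre-bilinear}, one per coordinate, plus bilinearity of $b$ (noting that $\lambda_x$ is additive, so it suffices to check the invariance on each layer separately and the cross terms are handled by bilinearity).
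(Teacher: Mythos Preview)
Your approach is essentially the same as the paper's: reduce each coordinate of $b$ to an application of Lemma~\ref{pre-bilinear}, using the recursive description of $\alpha'_n(a)$ for the first identity and the explicit formula $\lambda_x(g_k)=(\overline{x_{n-1}(h_{k,n})},\dots,\overline{x_1(h_{k,2})},z_k)$ (with $x_{j-1}$ the truncation of $x$ to $\overline{G}_{j-1}$) for the second. Your final remarks about an ``$A$-direction'' and ``cross terms handled by bilinearity'' are superfluous---there is no $A$-component inside $\overline{G}_n$, and $b$ has no cross terms since it is defined coordinatewise---so once you write out $\lambda_x$ as above the verification is exactly $n-1$ direct applications of the second equality in Lemma~\ref{pre-bilinear}.
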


\begin{proof}
The first equality is a direct consequence of
Lemma~\ref{pre-bilinear}, and the de\-fi\-ni\-tion of $b$ in terms
of all the $b'_j$, plus the definition of $\alpha'_n$ in terms of
all the $\widehat{\alpha}_j$.

For the second equality, recall that $x$ is of the form
$$
x=(\overline{h}_{n},\overline{h}_{n-1},\dots,\overline{h}_{2},z)
$$
for some $\overline{h}_{j}\in\overline{ H}_j$ and $z\in \F_{p_1}$.
We use the following notation: let $x_{1}=z$ and
$x_{j}=(\overline{h}_{j},
\overline{h}_{j-1},\dots,\overline{h}_{2},z)$, for $j=2,\dots
,n-1$. Then, if $g_k$, $k=1,2$, is of the form
$$g_k=(\overline{h}_{k,n},\overline{h}_{k,n-1},\dots,\overline{h}_{k,2},z_{k}),$$
the corresponding lambda map is equal to
\begin{eqnarray*}\lambda_{x}(g_k)&=&xg_k-x\\
&=&(\overline{h}_{n}+
\overline{x_{n-1}(h_{k,n})}-\overline{h}_{n},\dots
,\overline{h}_{2}+\overline{x_{1}(h_{k,2})}-\overline{h}_{2},~z+ z_k-z)\\
&=&(\overline{x_{n-1}(h_{k,n})},
\overline{x_{n-2}(h_{k,n-1})},\dots , \overline{x_{1}(h_{k,2})},~
z_k),
\end{eqnarray*}
where $x_{j-1}(h_{k,j})\colon \overline{G}_{j-1}\longrightarrow
\F_{p_j}$ is the map defined by
$$(x_{j-1}(h_{k,j}))(g)=h_{k,j}(x_{j-1}^{-1}g),$$
for all
$g\in\overline{G}_{j-1}$, for $j=2,\dots,n$. By
Lemma~\ref{pre-bilinear},
$$b_j(x_{j-1}(h_{1,j}),x_{j-1}(h_{2,j}))=b_j( h_{1,j},h_{2,j}).$$ Hence
\begin{eqnarray*}
b(\lambda_{x}(g_1),\lambda_{x}(g_2))&=&
(b'_2(\overline{x_{1}(h_{1,2})},\overline{x_{1}(h_{2,2})}),\dots ,b'_n(\overline{x_{n-1}(h_{1,n})},\overline{x_{n-1}(h_{2,n})}))\\
&=&(b_2(x_{1}(h_{1,2}),x_{1}(h_{2,2})),\dots ,b_n(x_{n-1}(h_{1,n}),x_{n-1}(h_{2,n})))\\
&=&(b_2(h_{1,2},h_{2,2}),\dots ,b_n(h_{1,n},h_{2,n}))\\
&=&(b'_2(\overline{h}_{1,2},\overline{h}_{2,2}),\dots ,b'_n(\overline{h}_{1,n},\overline{h}_{2,n}))\\
&=&b(g_1,g_2).
\end{eqnarray*}
Thus, the result is proved.
\end{proof}
Since $A$ is a trivial brace, by Lemma~\ref{bilinear} and
Theorem~\ref{ccs}, we can construct the asymmetric product
$\overline{ G}_n\rtimes_{\circ}A$ (via $b$ and $\alpha'_n$).

Recall from (\ref{deflambda}) that, for $(g,c),(g',c')\in
\overline{G}_n\rtimes_{\circ}A$,
$$\lambda_{(g,c)}(g',c')=
\left( \lambda_{g}\alpha'_n(c)(g')~,~
c'-b(\lambda_{g}\alpha'_n(c)(g'),g) \right).$$

Consider the semidirect product of left braces
$\F_{p_j}[\overline{G}_{j-1}]\rtimes \overline{G}_{j-1}$ with
respect to the action of $\overline{G}_{j-1}$ on the additive
group of the group ring $\F_{p_j}[\overline{G}_{j-1}]$ (considered
as a trivial brace) by left multiplication. As mentioned above, we
may identify $\F_{p_j}\wr \overline{G}_{j-1}$ with
$\F_{p_j}[\overline{G}_{j-1}]\rtimes \overline{G}_{j-1}$.

\begin{theorem}
The left brace $\overline{G}_n\rtimes_{\circ}A$ is simple and the
derived length of its multiplicative group is $n+1$.
\end{theorem}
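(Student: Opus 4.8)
The statement has two independent parts: (i) $\overline{G}_n\rtimes_\circ A$ is a simple left brace, and (ii) the derived length of its multiplicative group equals $n+1$. For part (i) the plan is to invoke the Theorem of Section~\ref{asymmetric} (the simplicity criterion for asymmetric products $T\rtimes_\circ S$ with $T$ a trivial brace and $S$ simple non-trivial). Here $S=A$ is a trivial brace, so that result does not apply directly; instead we must argue from scratch, essentially replaying the proof of that theorem but with the roles reversed, since now it is $T=\overline{G}_n$ that is complicated and $S=A$ that is trivial. Concretely: take a non-zero ideal $I$ of $\overline{G}_n\rtimes_\circ A$ and a non-zero element $(g,c)\in I$. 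First one shows $I$ must contain an element with non-zero first coordinate: if $c\neq 0$, use that the bilinear form $b$ is non-degenerate (each $b'_j$ is non-degenerate, hence so is $b$) together with the lambda-map formula to produce, via $(\lambda_{(g,c)}-\id)$ applied to suitable $(g',0)$, an element of the form $(g'',0)$ with $g''\neq 0$ — this is where the hypothesis ``for every $t\neq 0$ there is $t'$ with $b(t,t')\neq 0$'' of the general theorem gets replaced by non-degeneracy of $b$. Once $(g'',0)\in I$ with $g''\neq 0$, one pushes $g''$ around using the ideal property: conjugation and lambda-maps by elements $(x,c')$ move $g''$ by $\lambda_x$, by $\alpha'_n(c')$, and by additive translations, so the set of first coordinates appearing in $I\cap(\overline{G}_n\times\{0\})$ is a non-zero subgroup of $(\overline{G}_n,+)$ stable under all $\lambda_x$ and all $\alpha'_n(c')$.

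The core of part (i) is then the claim that the only such non-zero subgroup is all of $\overline{G}_n$; equivalently, that $\overline{G}_n$ has no proper non-zero subgroup invariant under $\{\lambda_x : x\in\overline{G}_n\}\cup\{\alpha'_n(c):c\in A\}$. I would prove this by induction on $n$, using the recursive structure $\overline{G}_j=\overline{H}_j\rtimes\overline{G}_{j-1}$ and the fact that $\overline{H}_j$, as an $\F_{p_j}$-module, carries the action of $\overline{G}_{j-1}$ by left translation together with the action of $\gamma$-multiplication coming from $\alpha'$; the point is that $\overline{H}_j$ is (close to) the augmentation ideal $\omega(\F_{p_j}[\overline{G}_{j-1}])$, and one must check it is irreducible — or at least has no invariant complement — under the combined group. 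Here the numerical hypotheses $p_j\mid p_i-1$ (ensuring the needed roots of unity $\gamma$ exist and that $p_j\nmid|\overline{G}_{j-1}|$, so $\F_{p_j}[\overline{G}_{j-1}]$ is semisimple) are exactly what make the module-theoretic step work. Having shown $I$ contains $\overline{G}_n\times\{0\}$, one finishes as in the general theorem: quotient by it and use that $A$ is (the trivial brace on) a product of fields $\F_{p_2}\times\cdots\times\F_{p_n}$, whose only ideals as a brace are sums of the factors; but the bilinear form $b$ has image spanning all of $A$ (since each $b'_j$ is non-zero into $\F_{p_j}$), which forces $I$ to project onto each factor, hence $I=\overline{G}_n\rtimes_\circ A$.

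**The derived length.** For part (ii), the key input is Lemma~\ref{semidirect}: with $K=\F_{p_j}$ and $G=\overline{G}_{j-1}$ (whose order $p_1p_2^{m_2}\cdots p_{j-1}^{m_{j-1}}$ is coprime to $p_j$), the group $\F_{p_j}[\overline{G}_{j-1}]\rtimes\overline{G}_{j-1}\cong\F_{p_j}\wr\overline{G}_{j-1}=G_j$ has derived length exactly $d_{j-1}+1$, where $d_{j-1}$ is the derived length of $(\overline{G}_{j-1},\cdot)$. Passing to the quotient $\overline{G}_j=G_j/N_j$ does not decrease the derived length (one checks $N_j\cap E_d=0$ or simply that $\overline{G}_j$ still surjects onto the relevant iterated commutator quotient — $N_j$ is central and the commutator subgroups in the formula of Lemma~\ref{semidirect} meet $N_j$ trivially because they are augmentation-supported), so $d_j=d_{j-1}+1$; starting from $d_1=1$ (the group $(\F_{p_1},+)$ is abelian) this gives $d_n=n$ by induction. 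Finally, $\overline{G}_n\rtimes_\circ A$ has multiplicative group the semidirect product $(\overline{G}_n,\cdot)\rtimes(A,\cdot)$ with $(A,\cdot)$ abelian, and the action $\alpha'_n$ has image acting without a global fixed structure, so $[\,\overline{G}_n\rtimes_\circ A,\ \overline{G}_n\rtimes_\circ A\,]$ already contains a full copy of $\overline{G}_n$-worth of commutators; hence adding the abelian top factor raises the derived length by exactly one, giving $n+1$.

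**Main obstacle.** The routine bookkeeping (the semidirect-product commutator calculations, the bilinear-form invariances already established in Lemmas~\ref{pre-bilinear}--\ref{bilinear}) is not where the difficulty lies. The hard part is the irreducibility/no-invariant-subspace step inside part (i): showing that $\overline{G}_n$ admits no proper non-zero additive subgroup stable simultaneously under all the lambda maps $\lambda_x$ ($x\in\overline{G}_n$) and all the twists $\alpha'_n(c)$ ($c\in A$). This requires a careful inductive analysis of the tower $\overline{G}_1\subset\cdots\subset\overline{G}_n$, tracking how an invariant subgroup must project onto each layer $\overline{H}_j$ and then, layer by layer, using semisimplicity of $\F_{p_j}[\overline{G}_{j-1}]$ together with the cyclic action of $\gamma$ to rule out proper invariant pieces — essentially the same mechanism that made the simple braces of~\cite{B3} and~\cite{BCJO} work, now iterated $n$ times.
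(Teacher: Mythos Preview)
Your derived-length argument is essentially the paper's: Lemma~\ref{semidirect} applied to $G_j=\F_{p_j}[\overline{G}_{j-1}]\rtimes\overline{G}_{j-1}$, the observation that $N_j$ is central and meets the relevant commutator subgroups trivially (so passing to $\overline{G}_j$ preserves the derived length), induction to get derived length $n$ for $\overline{G}_n$, and finally a direct computation showing $[\overline{G}_n\rtimes A,\overline{G}_n\rtimes A]=\overline{G}_n\times\{0\}$. That part is fine.

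The simplicity argument, however, has a genuine gap at exactly the step you flag as the ``main obstacle''. Your core claim --- that $\overline{G}_n$ has no proper non-zero additive subgroup stable under all $\lambda_x$ and all $\alpha'_n(c)$ --- is \emph{false}. The subgroup $\overline{H}_n\times\{0\}\subset\overline{H}_n\rtimes\overline{G}_{n-1}=\overline{G}_n$ is such a subgroup: for $x=(\overline{h},g)$ and $y=(\overline{h}',0)$ one has $\lambda_x(y)=(\sigma_g(\overline{h}'),0)$ (since $\overline{H}_n$ is a trivial brace), and $\alpha'_n(c)$ manifestly preserves the $\overline{H}_n$-layer. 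So the plan ``first show $\overline{G}_n\times\{0\}\subseteq I$, then pick up $A$'' cannot work; no amount of module-theoretic irreducibility analysis will rescue it, because the object is genuinely reducible. (There is also a smaller issue: the set $\{g:(g,0)\in I\}$ is not obviously closed under $+$, since $(g_1,0)+(g_2,0)=(g_1+g_2,\,b(g_1,g_2))$.)

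What makes $\overline{G}_n\rtimes_\circ A$ simple is precisely the \emph{coupling} through $b$ that your decoupled plan discards. The paper argues by induction on $n$, using the sub-brace $\{0\}\times\overline{G}_{n-1}\times A_{n-1}\times\{0\}\cong\overline{G}_{n-1}\rtimes_\circ A_{n-1}$. Given a non-zero element of $I$ with $\overline{h}_n\neq 0$, non-degeneracy of $b'_n$ is used (via $\lambda_{(\overline{h},0,\dots,0)}(\overline{h}_n,0,\dots,0,c)-(\overline{h}_n,0,\dots,0,c)=(0,\dots,0,-(0,\dots,0,b'_n(\overline{h}_n,\overline{h})))$) to produce a non-zero element of $\{0\}\times\cdots\times\{0\}\times\F_{p_n}\subset\{0\}\times A$; then the action $\alpha_1$ pushes this back to a non-zero element in the $G_1$-slot, landing inside the simple sub-brace $\overline{G}_{n-1}\rtimes_\circ A_{n-1}$; by induction that whole sub-brace lies in $I$, and from there one sweeps up $\overline{H}_n$ using the augmentation-ideal identity $I_n+\omega(\F_{p_n}[\overline{G}_{n-1}])=H_n$. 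The essential mechanism is the back-and-forth $\overline{H}_n\to\F_{p_n}\to G_1\to\overline{G}_{n-1}$ through $b$ and $\alpha$, not any irreducibility of $\overline{G}_n$ on its own.
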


\begin{proof}
 In this proof we shall identify $H_j$ with
$\F_{p_j}[\overline{G}_{j-1}]$ (via $f=\sum_{g\in
\overline{G}_{j-1}}f(g)g$, for all $f\in H_j$). Thus
$H_j=\F_{p_j}[\overline{G}_{j-1}]$,
$I_j=\{\sum_{g\in\overline{G}_{j-1}}zg\mid z\in \F_{p_j}\}$. As
usual, we  denote by $\overline{h}$ the natural image of $h\in
H_j$ in $\overline{H}_j=H_j/I_j$.

Note also that, for $j=2,\dots ,n$, $H_j$, $\overline{H}_j$ and
$A$ are trivial braces. In order to avoid confusion, we only use
the sum operation for these trivial braces. We will only use the
multiplication of the trivial brace $G_1$ for elements in the
group ring $H_2$. Thus the product in $H_j$ will mean the product
in the corresponding group ring.

We shall prove the simplicity of the left brace
$\overline{G}_n\rtimes_{\circ}A$  by induction on $n$. For $n=2$,
let $I$ be a nonzero ideal of $\overline{G}_2\rtimes_{\circ}A$.
Let $(\overline{h}_2,z_1,c)\in I$ be a nonzero element, where
$\overline{h}_2\in \overline{H}_2$, $z_1\in G_1$ and $c\in A$ . We
consider three mutually exclusive cases.

{\em Case 1.} Suppose that $\overline{h}_2\neq 0$. Then
$p_1(\overline{h}_2,z_1,c)=(p_1\overline{h}_2,0,c')\in I$, for
some $c'\in A$ and $p_1\overline{h}_2\neq 0$, because
$\overline{H}_2$ has order a power of $p_2$. Thus we may assume
that $(\overline{h}_2,0,c)\in I$ and $\overline{h}_2\neq 0$. Since
$b'_2$ is non-degenerate, there exists $\overline{h}\in
\overline{H}_2$ such that $b'_2(\overline{h}_2,\overline{h})\neq
0$. Hence
\begin{eqnarray*}
\lambda_{(\overline{h},0,0)}(\overline{h}_2,0,c)-(\overline{h}_2,0,c)&=&(\overline{h}_2,0,c-b'_2(\overline{h}_2,\overline{h}))-(\overline{h}_2,0,c)\\
&=&(0,0,-b'_2(\overline{h}_2,\overline{h}))\in I.
\end{eqnarray*}
Since $A$ has order $p_2$ and $b'(\overline{h}_2,\overline{h})\in
A\setminus\{ 0\}$, we have that $\{0\}\times \{0\}\times
A\subseteq I$. In particular $(0,0,a)\in I$,  where $a=1$ is the
fixed generator of $A$. Thus
$$\lambda_{(0,0,a)}(0,z,0)-(0,z,0)=(0,(\gamma-1)z,0)\in I,$$
for all $z\in G_1$. Since $\gamma-1$ is a nonzero element of
$\F_{p_1}$, we have $\{0\}\times G_1\times \{ 0\}\subseteq I$.
Hence $\{0\}\times G_1\times A\subseteq I$. Now we have
$$(\lambda_{(0,z,0)}-\id)(\overline{h},0,0)=(\overline{(z-1)h},0,-b'_2(\overline{(z-1)h},\overline{h}))\in I,$$
for all $\overline{h}\in \overline{H}_2$ and $z\in G_1$. Hence
$$(\sum_{z\in G_1}\overline{(z-1)h},0,0)\in I,$$
for all $h\in H_2$. Therefore $\pi_2(\omega(\F_{p_2}[G_1]))\times
G_1\times A\subseteq I$, where $\pi_2\colon H_2\longrightarrow
\overline{H}_2$ is the natural map. Note that
$$I_2+\omega(\F_{p_2}[G_1])=\F_{p_2}[G_1]=H_2.$$
Therefore $\overline{H}_2\times G_1\times A\subseteq I$. This
implies that $\overline{G}_2\rtimes_{\circ}A=I$ in this case.

{\em Case 2.} Suppose that $\overline{h}_2=0$ and $z_1\neq 0$. Now
$p_2(0,z_1,c)=(0,p_2z_1,0)\in I$ and $p_2z_1\neq 0$ because $G_1$
has order $p_1$. Now
$$\lambda_{(0,z_1,0)}(\overline{h},0,0)-(\overline{h},0,0)=(\overline{z_1h-h},0,-b'_2(\overline{z_1h-h},\overline{h}))\in I,$$
for all $\overline{h}\in \overline{H}_2$. In particular, for $h=1$
in the group ring $H_2$, we have
$(\overline{z_1-1},0,-b'_2(z_1-1,1))\in I$. Note  $z_1-1\in
H_2\setminus I_2$. Therefore $\overline{z_1-1}\neq 0$ in
$\overline{H}_2$. So, by Case 1,
$\overline{G}_2\rtimes_{\circ}A=I$, as desired.

{\em Case 3.} Suppose that $(h_2,z_1)=0$ and $c\neq 0$. Then
$(0,0,a)\in I$ and, as in the proof of   Case 1, we again get that
$\overline{G}_2\rtimes_{\circ}A=I$.

Therefore for $n=2$ the left brace
$\overline{G}_2\rtimes_{\circ}A$ is simple.

Suppose now that $n>2$ and assume that the result holds for $n-1$.
Let $A_{n-1}=\F_{p_2}\times\dots\times \F_{p_{n-1}}$. Note that
$\{0\}\times \overline{G}_{n-1}\times A_{n-1}\times \{0\}$ is a
subbrace of $\overline{G}_n\rtimes_{\circ}A$ isomorphic to
$\overline{G}_{n-1}\rtimes_{\circ}A_{n-1}$ which is simple by the
induction hypothesis. Let $I$ be a nonzero ideal of
$\overline{G}_n\rtimes_{\circ}A$ and let $(\overline{h}_n,\dots
,\overline{h}_2,z_1,c)$ be a nonzero element of $I$, where
$\overline{h}_j\in \overline{H}_j$, $z_1\in G_1$ and $c\in A$. We
again consider three mutually exclusive cases.

{\em Case i.} Suppose that $\overline{h}_n\neq 0$. Then
$$p_1\cdots
p_{n-1}(\overline{h}_n,\dots ,\overline{h}_2,z_1,c)=(p_1\cdots
p_{n-1}\overline{h}_n,0,\dots ,0,0,c')\in I$$ and $p_1\cdots
p_{n-1}\overline{h}_n\neq 0$, because $\overline{H}_n$ has order a
power of $p_n$. Thus, without loss of generality,  we may assume
that $(\overline{h}_n,0,\dots ,0,0,c)\in I$ and
$\overline{h}_n\neq 0$. Since $b'_n$ is non-degenerate, there
exists $\overline{h}\in \overline{H}_n$ such that
$b'_n(\overline{h}_n,\overline{h})\neq 0$. Hence
\begin{eqnarray*}\lefteqn{\lambda_{(\overline{h},0,\dots ,0,0)}(\overline{h}_n,0,\dots
,0,c)-(\overline{h}_n,0,\dots,0,c)}\\
&=&(\overline{h}_n,0,\dots ,0,c-(0,\dots,0,b'_n(\overline{h}_n,\overline{h})))-(\overline{h}_n,0,\dots ,0,c)\\
&=&(0,0,\dots
,0,-(0,\dots,0,b'_n(\overline{h}_n,\overline{h})))\in I.
\end{eqnarray*}
In particular $(0,\dots ,0,(0,\dots ,0,1))\in I$. Thus
\begin{eqnarray*}\lefteqn{(\lambda_{(0,\dots,0,(0,\dots,0,1))}-\id)(0,\dots
,0,z,0)}\\
&&=(0,\dots ,0,\alpha_1(0,\dots,0,1)(z)-z,0)\in I\cap (\{0\}\times
\overline{G}_{n-1}\times A_{n-1}\times\{ 0\}),
\end{eqnarray*}
for all $z\in G_1$. Since $\alpha_1(0,\dots,0,1)\neq \id$  and the
subbrace $\{0\}\times \overline{G}_{n-1}\times A_{n-1}\times\{
0\}$ is simple, we have that $\{0\}\times \overline{G}_{n-1}\times
A_{n-1}\times\{ 0\}\subseteq I$. Moreover, $\{0\}\times
\overline{G}_{n-1}\times A\subseteq I$ because $(0,\dots
,0,(0,\dots,0,1))\in I$. Now we have
$$(\lambda_{(0,g,0)}-\id)(\overline{h},0,0)=(\overline{gh-h},0,-(0,\dots ,0,b'_n(\overline{(g-1)h},\overline{h})))\in I,$$
for all $\overline{h}\in \overline{H}_n$ and $g\in
\overline{G}_{n-1}$, and then
$\pi_n(\omega(\F_{p_n}[\overline{G}_{n-1}]))\times \overline{
G}_{n-1}\times A\subseteq I$, where $\pi_n\colon
H_n\longrightarrow \overline{H}_n$ is the natural map. Note that
\begin{equation}\label{augment}I_n+\omega(\F_{p_n}[\overline{G}_{n-1}])=\F_{p_n}[\overline{G}_{n-1}]=H_n. \end{equation}
Therefore $\overline{G}_n\rtimes_{\circ}A=I$ in this case.

{\em Case ii.} Suppose that $\overline{h}_n=0$ and
$(\overline{h}_{n-1},\dots ,\overline{h}_2,z_1)\neq 0$. Since the
subbrace $\{0\}\times \overline{G}_{n-1}\times A_{n-1}\times\{ 0\}$
is simple, we have that $\{0\}\times\overline{G}_{n-1}\times
A_{n-1}\times\{ 0\}\subseteq I$. Hence, as in the proof of the last
part of Case i, $\overline{G}_n\rtimes_{\circ}A=I$.

{\em Case iii.} Suppose that $(\overline{h}_n,\dots
,\overline{h}_2,z_1)=0$ and $c\neq 0$. In this case
$$(\lambda_{(0,\dots,0,c)}-\id)(0,\dots ,0,z,0)=(0,\dots ,0,\alpha_1(c)(z)-z,0)\in I,$$
for all $z\in \F_{p_1}$. Since $\alpha_1(c)\neq \id$, by Case ii,
$\overline{G}_n\rtimes_{\circ}A=I$, again as desired.

Therefore $\overline{G}_n\rtimes_{\circ}A$ is simple.

In the remainder of the proof we  show that the derived length of
the multiplicative group of the left brace
$\overline{G}_n\rtimes_{\circ}A$ is $n+1$. Recall that this  group
is the semidirect product $\overline{G}_n\rtimes A$ (via the
action $\alpha'_n$) of the multiplicative groups of the left
braces $\overline{G}_n$ and $A$ (recall that $A$ is the trivial
brace associated to the ring $\F_{p_2}\times\dots\times \F_{p_n}$,
and therefore the multiplicative group of the trivial brace $A$
coincides with its additive group). First we shall prove by
induction on $n$ that
$$[\overline{G}_n\rtimes A,\overline{G}_n\rtimes A]=\overline{G}_n\times\{ 0\}.$$
Suppose that $n=1$. Note that if $z_1\in G_1$, then
$$[(1,a)^{-1},(z_1,0)^{-1}]=(1,a)(z_1,0)(1,a)^{-1}(z_1,0)^{-1}=((\gamma-1)\cdot z_1,0).$$
Hence $[G_1\rtimes A,G_1\rtimes A]=G_1\times \{0\}$. Suppose that
$n\geq 2$ and that
$$[\overline{G}_{n-1}\rtimes A_{n-1},\overline{G}_{n-1}\rtimes A_{n-1}]=\overline{G}_{n-1}\times \{0\}.$$
Let $g_{n-1}\in \overline{G}_{n-1}$ and $\overline{h_n}\in
\overline{ H}_n$. Since the left brace $H_n$ is trivial, we will
use the additive notation on $H_n$. Then
$$[(0,g_{n-1},0),(\overline{h}_n,1,0)]=(\overline{(1-g^{-1}_{n-1})h_n},1,0).$$
Thus
$$\pi_n(\omega(\F_{p_n}[\overline{G}_{n-1}]))\times\{1\}\times
\{0\}\subseteq [\overline{G}_{n}\rtimes A,\overline{G}_{n}\rtimes
A] .$$ By (\ref{augment}), which is also true for $n=2$, we obtain
 that
$$\overline{H}_n\times\{1\}\times \{0\}\subseteq [\overline{G}_{n}\rtimes A,\overline{G}_{n}\rtimes A].$$ Since
$$[\overline{G}_{n-1}\rtimes A_{n-1},\overline{G}_{n-1}\rtimes A_{n-1}]=\overline{G}_{n-1}\times \{0\},$$
we get that
$$\overline{H}_n\times \overline{G}_{n-1}\times \{0\}\subseteq [\overline{G}_{n}\rtimes A,\overline{G}_{n}\rtimes A]$$
and thus
$$[\overline{G}_{n}\rtimes
A,\overline{G}_{n}\rtimes A]=\overline{G}_{n}\times \{0\},$$ as
claimed. Hence, it is enough to show that the derived length of
the multiplicative group $\overline{G}_n$ is $n$. It is clear that
the derived length of $\overline{G}_2$ is $2$. Suppose that $n>2$
and that $\overline{G}_{n-1}$ has derived length $n-1$. Recall
that $G_n=\F_{p_n}[\overline{G}_{n-1}]\rtimes \overline{G}_{n-1}$.
Hence, by Lemma~\ref{semidirect}, the derived length of $G_n$ is
$n$. Note that $N_n=I_n\times \{1\}$ is a central subgroup of
$G_n=H_n\rtimes\overline{G}_{n-1}$, and by (\ref{augment})
$$G_n=N_n(\omega(\F_{p_n}[\overline{G}_{n-1}])\rtimes \overline{G}_{n-1}).$$
Furthermore, it is easy to see that
$$N_n\cap(\omega(\F_{p_n}[\overline{G}_{n-1}])\rtimes \overline{G}_{n-1})=\{1\}.$$
Since $$\overline{G}_n=G_n/N_n\cong
\omega(\F_{p_n}[\overline{G}_{n-1}])\rtimes \overline{G}_{n-1},$$
and
$$[G_n,G_n]=[\omega(\F_{p_n}[\overline{G}_{n-1}])\rtimes \overline{G}_{n-1},\omega(\F_{p_n}[\overline{G}_{n-1}])\rtimes \overline{G}_{n-1}]\cong
[\overline{G}_n,\overline{G}_n],$$ the derived length of
$\overline{G}_n$ also is $n$ and the result follows.
\end{proof}

\subsection{A generalized construction of simple braces}

 In this
subsection we present a broad class of simple left braces,
extending the construction of \cite{B3}. Let
$A=\mathbb{Z}/(l_1)\times\dots\times \mathbb{Z}/(l_s)$ be a finite
abelian group and let $p$ be a prime such that $p\mid q-1$ for
every prime divisor $q$ of the order of $A$. For every $i=1,\dots
,s$, let
\begin{enumerate}
\item
$b_i$ be a non-degenerate symmetric bilinear form over
$(\mathbb{Z}/(p))^{n_i}$,
\item  $f_i$ and $c_i$ be elements of order $p$ and $l_i$ respectively in
the orthogonal group of $b_i$,
\item $\gamma_i\in \mathbb{Z}/(l_i)$ be an invertible element of order $p$.
\end{enumerate}
Assume that
\begin{itemize}
\item[(i)]
$f_ic_i=c_i^{\gamma_i}f_i$,
\item[(ii)] $\gamma_i-1$ is invertible in $\mathbb{Z}/(l_i)$.
\end{itemize}
Put $n=n_1+\dots+n_s$. Let $T=(\mathbb{Z}/(p))^{n}\rtimes A$ be the
semidirect product of the trivial braces $(\mathbb{Z}/(p))^{n}$ and
$A$ via the action
$$\beta\colon A\longrightarrow \Aut((\mathbb{Z}/(p))^{n})$$
defined by $\beta_{(a_1,\dots ,a_s)}(u_1,\dots
,u_s)=(c_1^{a_1}(u_1),\dots,c_s^{a_s}(u_s))$, where $\beta(a_1,\dots
,a_s)=\beta_{(a_1,\dots ,a_s)}$, for all $a_i\in \mathbb{Z}/(l_i)$
and all $u_i\in (\mathbb{Z}/(p))^{n_i}$.

\begin{lemma}\label{alpha}
The map $\alpha\colon (\mathbb{Z}/(p),+)\longrightarrow
\Aut(T,+,\cdot)$ defined by
$$\alpha_{\mu}(u_1,\dots ,u_s,a_1,\dots
,a_s)=(f_1^{\mu}(u_1),\dots ,f_s^{\mu}(u_s),\gamma_1^{\mu}a_1,\dots
,\gamma_s^{\mu}a_s),$$ where $\alpha(\mu)=\alpha_{\mu}$, for all
$\mu\in\mathbb{Z}/(p)$, all $a_i\in \mathbb{Z}/(l_i)$ and all
$u_i\in (\mathbb{Z}/(p))^{n_i}$, is a homomorphism of groups.
\end{lemma}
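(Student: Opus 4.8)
The plan is to check two things, in this order: first, that for each $\mu\in\mathbb{Z}/(p)$ the formula defines a map $\alpha_\mu\colon T\to T$ which is an endomorphism both of the additive group $(T,+)$ and of the multiplicative group $(T,\cdot)$; and second, that $\mu\mapsto\alpha_\mu$ converts addition in $\mathbb{Z}/(p)$ into composition of maps. Once both are established, taking $\alpha_\mu\circ\alpha_{-\mu}=\alpha_0=\id_T$ shows each $\alpha_\mu$ is bijective, hence lies in $\Aut(T,+,\cdot)$, and the homomorphism property is already in hand.

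For the first point, recall that since $A$ is a trivial brace, $(T,+)=(\mathbb{Z}/(p))^n\times A$ is just the direct product of abelian groups, on which $\alpha_\mu$ acts coordinatewise by $f_i^\mu$ on $(\mathbb{Z}/(p))^{n_i}$ and by multiplication by $\gamma_i^\mu$ on $\mathbb{Z}/(l_i)$. As $f_i$ lies in the orthogonal group of $b_i$ it is in particular an additive automorphism of $(\mathbb{Z}/(p))^{n_i}$, and $\gamma_i$ is invertible in $\mathbb{Z}/(l_i)$; since moreover $f_i^p=\id$ and $\gamma_i^p=1$, the expressions $f_i^\mu$ and $\gamma_i^\mu$ depend only on $\mu$ modulo $p$, so $\alpha_\mu$ is well defined and lies in $\Aut(T,+)$. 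For the multiplicative part, write the product in $T=(\mathbb{Z}/(p))^n\rtimes A$ as
$$(u_1,\dots,u_s,a_1,\dots,a_s)\cdot(u_1',\dots,u_s',a_1',\dots,a_s')=(u_1+c_1^{a_1}(u_1'),\dots,u_s+c_s^{a_s}(u_s'),a_1+a_1',\dots,a_s+a_s').$$
The $A$-coordinates are respected by $\alpha_\mu$ because multiplication by $\gamma_i^\mu$ is additive, and the $(\mathbb{Z}/(p))^{n_i}$-coordinate is respected once we know the operator identity $f_i^\mu c_i^{a_i}=c_i^{\gamma_i^\mu a_i}f_i^\mu$ (using additivity of $f_i^\mu$ to split $f_i^\mu(u_i+c_i^{a_i}(u_i'))$).

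This conjugation identity is the only place hypothesis (i) enters, and it is the step I expect to require the most care, though it is routine: from $f_ic_i=c_i^{\gamma_i}f_i$ one gets $f_ic_i^{a}=c_i^{\gamma_i a}f_i$ for all $a$ by induction on $a$, and from that $f_i^\mu c_i^{a}=c_i^{\gamma_i^\mu a}f_i^\mu$ by induction on $\mu$; exponents of $c_i$ are read modulo $l_i=\ord{c_i}$, and $\gamma_i^\mu a$ is a well-defined element of $\mathbb{Z}/(l_i)$, so the bookkeeping is consistent. (Hypothesis (ii) and the non-degeneracy of the $b_i$ play no role here; they are needed only in the later simplicity argument.) Finally, from $f_i^{\mu+\nu}=f_i^\mu f_i^\nu$ and $\gamma_i^{\mu+\nu}=\gamma_i^\mu\gamma_i^\nu$ one reads off coordinatewise that $\alpha_{\mu+\nu}=\alpha_\mu\circ\alpha_\nu$ and $\alpha_0=\id_T$, which as noted yields bijectivity of each $\alpha_\mu$ and that $\alpha\colon(\mathbb{Z}/(p),+)\longrightarrow\Aut(T,+,\cdot)$ is a homomorphism of groups, completing the proof.
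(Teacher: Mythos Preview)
Your proof is correct and follows essentially the same approach as the paper's: the paper simply asserts that $\alpha_\mu\in\Aut(T,+)$ is clear and that the multiplicativity is a straightforward verification from the relation $f_ic_i=c_i^{\gamma_i}f_i$, whereas you spell out the derivation of the needed conjugation identity $f_i^\mu c_i^{a}=c_i^{\gamma_i^\mu a}f_i^\mu$ and the well-definedness modulo $p$. There is no substantive difference in strategy.
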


\begin{proof}
It is clear that $\alpha_{\mu}\in \Aut(T,+)$. Because
$f_ic_i=c_i^{\gamma_i}f_i$ for all $i=1,\dots ,s$, it is
straightforward to verify that $\alpha_{\mu}$ also is a
multiplicative group homomorphism, and thus a brace homomorphism.
Hence, $\alpha$ is a homomorphism.
\end{proof}

Let $b\colon T\times T\longrightarrow \mathbb{Z}/(p),$ defined by
$$b((u_1,\dots ,u_s,a_1,\dots ,a_s),(v_1,\dots ,v_s,a'_1,\dots
,a'_s))=\sum_{i=1}^sb_i(u_i,v_i),$$
for  all  $u_i\in (\mathbb{Z}/(p))^{n_i}$ and all $a_i\in
\mathbb{Z}/(l_i)$.  Since
each $b_i$ is a symmetric bilinear form, it is clear that $b$ is a
symmetric $2$-cocyle on $T$ with values in $\mathbb{Z}/(p)$.

We shall see that $\alpha$ and $b$ satisfy condition  (\ref{cond3}). In
this case this is equivalent to verify conditions (\ref{condAlpha}) and
(\ref{condLambda}), and for this we need to check:
\begin{eqnarray}
b(t_2, t_3) = b(\alpha_{\mu}(t_2),
\alpha_{\mu}(t_3)),\end{eqnarray}
\begin{eqnarray}
b(t_2,t_3)= b(\lambda_{t_1}(t_2),\lambda_{t_1}(t_3)),\end{eqnarray}
for all $t_1,t_2,t_3\in T$ and all $\mu\in \mathbb{Z}/(p)$. That
these equalities hold follows from the definitions of $\alpha$,
$\lambda$ and $b$, and from the fact that $c_i$ and $f_i$ are in
the orthogonal group of $b_i$. Hence, by Theorem~\ref{ccs}, we get
the asymmetric product $T\rtimes_{\circ}\mathbb{Z}/(p)$ of $T$ by
$\mathbb{Z}/(p)$ via $b$ and $\alpha$.

Let $B=T\rtimes_{\circ} \mathbb{Z}/(p)$. Recall that the addition in
$B$ is given by
\begin{eqnarray*}\lefteqn{ (u_1,\dots, u_s,a_1,\dots
a_s,\mu)+(v_1,\dots, v_s,a'_1,\dots
a'_s,\mu')}\\
&=&\left(u_1+v_1,\dots, u_s+v_s,~~a_1+a'_1,\dots
a_s+a'_s,~~\mu+\mu'+\sum_{i=1}^sb_i(u_i,v_i)\right),\end{eqnarray*} for all
$u_i,v_i\in (\mathbb{Z}/(p))^{n_i}$, all $a_i,a'_i\in
\mathbb{Z}/(l_i)$ and all $\mu,\mu'\in \mathbb{Z}/(p)$. The lambda
map of $B$ is given by
\begin{eqnarray*}
\lefteqn{\lambda_{(u_1,\dots ,u_s,a_1,\dots ,a_s,\mu)}(v_1,\dots ,v_s,a'_1,\dots ,a'_s,\mu')}\\
&=&\left(c_1^{a_1}f_1^{\mu} (v_1),\dots
,c_s^{a_s}f_s^{\mu}(v_s),\gamma_1^{\mu}a'_1,\dots
,\gamma_s^{\mu}a'_s,\;
\mu'-\sum_{i=1}^sb_i(c_i^{a_i}f_i^{\mu}(v_i),u_i)\right),
\end{eqnarray*}
where $a_i,a'_i\in \mathbb{Z}/(l_i)$ and $u_i,v_i\in
(\mathbb{Z}/(p))^{n_i}$, $\mu,\mu'\in \mathbb{Z}/(p)$.

The following technical remark is of use for the subsequent proofs.

\begin{remark}\label{cifi}{\rm
For $i=1,\dots ,s$, let $G_i$ be the subgroup of
$\Aut((\mathbb{Z}/(p))^{n_i})$ generated by $c_i$ and $f_i$. Then
there exists a natural homomorphism of $\mathbb{Z}/(p)$-algebras
$\varphi_i\colon \mathbb{Z}/(p)[G_i]\longrightarrow
\End((\mathbb{Z}/(p))^{n_i})$ such that $\varphi_i(c_i)=c_i$ and
$\varphi_i(f_i)=f_i$. Since
\begin{eqnarray*}\omega(\mathbb{Z}/(p)[G_i])&=&(c_i-\id)\mathbb{Z}/(p)[G_i]+(f_i-\id)\mathbb{Z}/(p)[G_i]\\
&=&\mathbb{Z}/(p)[G_i](c_i-\id)+\mathbb{Z}/(p)[G_i](f_i-\id),
\end{eqnarray*}
it is clear that for every $u_i\in (\mathbb{Z}/(p))^{n_i}$ and every
$g\in \varphi(\omega(\mathbb{Z}/(p)[G_i]))$, $g(u_i)\in
\im(c_i-\id)+\im(f_i-\id)$. Furthermore, if $v_i\in
\im(c_i-\id)+\im(f_i-\id)$, then $f_i(v_i),c_i(v_i)\in
\im(c_i-\id)+\im(f_i-\id)$.}
\end{remark}

\begin{lemma}\label{ideal}
Let
\begin{eqnarray*}
 I'&=&\{ (w_1,\dots,w_s,a'_1,\dots,a'_s,\mu')\mid w_i \in
\im(c_i-\id)+\im(f_i-\id),\; a'_i\in \mathbb{Z}/(l_i), \\
 &&  \mbox{ for } i=1, \ldots, s,\; \mu'\in
\mathbb{Z}/(p) \}.
\end{eqnarray*}
Then $I'$ is an ideal of $B$.
\end{lemma}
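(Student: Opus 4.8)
The plan is to verify directly that $I'$ satisfies the two defining conditions of an ideal of the left brace $B=T\rtimes_\circ \mathbb{Z}/(p)$: namely, that $I'$ is a subgroup of the additive group $(B,+)$, and that $\lambda_b(a)\in I'$ for every $b\in B$ and every $a\in I'$. (It then follows automatically, by the remark recalled in the introduction, that $I'$ is also a normal subgroup of the multiplicative group, since an additive subgroup closed under all $\lambda_b$ is automatically closed under the multiplication and under conjugation; but I may as well check normality directly if it is short.) The key structural input is Remark~\ref{cifi}: for each $i$, the subspace $W_i:=\im(c_i-\id)+\im(f_i-\id)$ of $(\mathbb{Z}/(p))^{n_i}$ is invariant under both $c_i$ and $f_i$, hence under all of $\varphi_i(\mathbb{Z}/(p)[G_i])$, and in particular under every power $c_i^{a_i}f_i^\mu$.

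First I would check additive closure. Given two elements of $I'$, their sum in $B$ has, by the displayed addition formula, first coordinates $w_i+v_i$ (each in $W_i$ since $W_i$ is a subspace), middle coordinates $a_i'+a_i''\in\mathbb{Z}/(l_i)$, and last coordinate $\mu'+\mu''+\sum_i b_i(w_i,v_i)\in\mathbb{Z}/(p)$; the last-coordinate correction term is harmless because the last coordinate of an element of $I'$ ranges freely over all of $\mathbb{Z}/(p)$. Similarly the additive inverse stays in $I'$. So $I'$ is an additive subgroup. Next, the main step: I would plug a general $b=(u_1,\dots,u_s,a_1,\dots,a_s,\mu)\in B$ and a general $a=(w_1,\dots,w_s,a_1',\dots,a_s',\mu')\in I'$ into the displayed formula for $\lambda_b$. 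Its first coordinates are $c_i^{a_i}f_i^{\mu}(w_i)$, which lie in $W_i$ by the invariance noted above (this is exactly where Remark~\ref{cifi} is used); its middle coordinates are $\gamma_i^\mu a_i'\in\mathbb{Z}/(l_i)$, automatically fine; and its last coordinate is $\mu'-\sum_i b_i(c_i^{a_i}f_i^\mu(w_i),u_i)\in\mathbb{Z}/(p)$, again fine because the last coordinate is unconstrained in the definition of $I'$. Hence $\lambda_b(a)\in I'$.

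Finally, I would observe that $I'$ is a normal subgroup of $(B,\cdot)$: this can be read off the multiplication formula of Theorem~\ref{ccs} together with the relation $f_ic_i=c_i^{\gamma_i}f_i$ and the fact that, for $b=(u,\dots)\in B$, conjugation of $a\in I'$ by $b$ produces first coordinates of the form (something in $W_i$) $+$ (image under some $c_i^{a_i}f_i^{\mu}$ of something in $W_i$), which is in $W_i$; the middle and last coordinates again cause no trouble. Alternatively, since $I'$ is an additive subgroup closed under all $\lambda_b$, the identities recalled in the introduction ($-y=\lambda_y(y^{-1})$ style, and $x+y=x\lambda_{x^{-1}}(y)$, together with the analogous left-ideal computation $xy=\lambda_x(y)+x$) show $I'$ is simultaneously a subgroup of $(B,\cdot)$ and normal, so it is an ideal. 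I do not anticipate a serious obstacle here; the only point requiring care is making sure the bilinear-form correction terms in the $+$ and $\lambda$ formulas do not impose any constraint on the elements of $I'$, which is guaranteed precisely because $I'$ places no restriction on its last ($\mathbb{Z}/(p)$-valued) coordinate. So the proof is essentially a bookkeeping verification built on the single nontrivial fact that $W_i=\im(c_i-\id)+\im(f_i-\id)$ is $\langle c_i,f_i\rangle$-invariant.
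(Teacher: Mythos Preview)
Your additive-closure and $\lambda$-invariance checks are fine and match the paper's proof exactly. The gap is in the normality step.

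First, the claim (made in your opening paragraph and again in the ``Alternatively'' at the end) that an additive subgroup closed under every $\lambda_b$ is automatically normal in $(B,\cdot)$ is false in general: that only gives a \emph{left ideal}, which is a multiplicative subgroup but need not be normal. The identities from the introduction establish multiplicative closure, not closure under conjugation, so this shortcut cannot replace a direct check.

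Second, your direct sketch of the conjugation check misidentifies the crucial term. Conjugating $(w_1,\dots,w_s,a_1',\dots,a_s',\mu')\in I'$ by $(u_1,\dots,u_s,a_1,\dots,a_s,\mu)\in B$ gives, as in the paper's computation, $i$-th first coordinate
\[
c_i^{-\gamma_i^{-\mu}a_i}f_i^{-\mu}(w_i)\;+\;\bigl(c_i^{-\gamma_i^{-\mu}a_i}f_i^{-\mu}c_i^{a_i'}f_i^{\mu'}-f_i^{-\mu}c_i^{-a_i}\bigr)(u_i).
\]
The first summand is in $W_i$ by invariance, but the second is \emph{not} ``an image under some $c_i^{a_i}f_i^{\mu}$ of something in $W_i$'': the vector $u_i$ is arbitrary. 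What makes this summand land in $W_i$ is that it has the form $(g-h)(u_i)$ with $g,h\in G_i=\langle c_i,f_i\rangle$, so $g-h$ lies in the augmentation ideal $\omega(\mathbb{Z}/(p)[G_i])$, and Remark~\ref{cifi} says exactly that the image of any such element acting on $(\mathbb{Z}/(p))^{n_i}$ is contained in $W_i$. Your description skips this point, and without it the normality verification does not go through. Once you insert this observation, your argument coincides with the paper's.
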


\begin{proof}
Clearly $I'$ is a subgroup of the additive group of $B$.

Let $(u_1,\dots ,u_s,a_1,\dots ,a_s,\mu)\in B$ and $(w_1,\dots
,w_s,a'_1,\dots ,a'_s,\mu')\in I'$. We have
\begin{eqnarray*}\lefteqn{\lambda_{(u_1,\dots ,u_s,a_1,\dots ,a_s,\mu)}(w_1,\dots ,w_s,a'_1,\dots ,a'_s,\mu')}\\
&=&(c_1^{a_1}f_1^{\mu} (w_1),\dots
,c_s^{a_s}f_s^{\mu}(w_s),\gamma_1^{\mu}a'_1,\dots
,\gamma_s^{\mu}a'_s,\;
\mu'-\sum_{i=1}^sb_i(c_i^{a_i}f_i^{\mu}(v_i),u_i)).
\end{eqnarray*}
Thus, by Remark~\ref{cifi},  $\lambda_{(u_1,\dots
,u_s,a_1,\dots ,a_s,\mu)}(w_1,\dots ,w_s,a'_1,\dots ,a'_s,\mu')\in
I'$. Finally
\begin{eqnarray*}
\lefteqn{(u_1,\dots ,u_s,a_1,\dots ,a_s,\mu)^{-1}(w_1,\dots
,w_s,a'_1,\dots ,a'_s,\mu')(u_1,\dots ,u_s,a_1,\dots ,a_s,\mu)}\\
&&=(-f_1^{-\mu}c_1^{-a_1}(u_1),\dots
,-f_s^{-\mu}c_s^{-a_s}(u_s),-\gamma_1^{-\mu}a_1,\dots
,-\gamma_s^{-\mu}a_s,-\mu)\\
&&\quad(w_1+c_1^{a'_1}f_1^{\mu'}(u_1),\dots
,w_s+c_s^{a'_s}f_s^{\mu'}(u_s),a'_1+\gamma_1^{\mu'}a_1,\dots
,a'_s+\gamma_s^{\mu'}a_s,\mu'+\mu)\\
&&=(-f_1^{-\mu}c_1^{-a_1}(u_1)+c_1^{-\gamma_1^{-\mu}a_1}f_1^{-\mu}(w_1+c_1^{a'_1}f_1^{\mu'}(u_1)),\dots
,-f_s^{-\mu}c_s^{-a_s}(u_s)\\
&&\quad+c_s^{-\gamma_s^{-\mu}a_s}f_s^{-\mu}(w_s+c_s^{a'_s}f_s^{\mu'}(u_s)),-\gamma_1^{-\mu}a_1\\
&&\quad+\gamma_1^{-\mu}(a'_1+\gamma_1^{\mu'}a_1),\dots
,-\gamma_1^{-\mu}a_1+\gamma_1^{-\mu}(a'_1+\gamma_1^{\mu'}a_1),\mu')\in
I',
\end{eqnarray*}
by Remark~\ref{cifi}. Thus, $I'$ is a normal subgroup of the
multiplicative group of $B$, and the result follows.
\end{proof}

\begin{theorem}\label{main}  The left brace $B=T\rtimes_{\circ} \mathbb{Z}/(p)$ is simple if and only if
$\im(c_i-\id)+\im(f_i-\id)=(\Z/(p))^{n_i}$, for every $i=1,\dots
,s$. In particular, a sufficient condition to be simple is that
$c_i-\id$ is invertible for every $i=1,\dots ,s$.
\end{theorem}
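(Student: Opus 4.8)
The plan is to prove both directions of the equivalence, using the ideal $I'$ from Lemma~\ref{ideal} as the bridge, and to mimic closely the structure of the proof of the first (simpler) simplicity theorem already established in the excerpt (the one for trivial $T$). Throughout I write $B=T\rtimes_\circ \mathbb Z/(p)$, $V_i=(\mathbb Z/(p))^{n_i}$, and $W_i=\im(c_i-\id)+\im(f_i-\id)\le V_i$.

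For the easy direction, suppose some $W_i\neq V_i$. Then $I'$ is a proper ideal of $B$ by Lemma~\ref{ideal}: it is proper precisely because its $i$-th coordinate is confined to $W_i\subsetneq V_i$, while it is nonzero since it contains $\{0\}\times\cdots\times\mathbb Z/(l_i)\times\cdots\times\{0\}\times\cdots$ and the whole last $\mathbb Z/(p)$-coordinate. Hence $B$ is not simple. This disposes of the ``only if'' part.

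For the ``if'' direction, assume $W_i=V_i$ for every $i$, and let $I$ be a nonzero ideal of $B$; I must show $I=B$. First I would pick a nonzero $(u_1,\dots,u_s,a_1,\dots,a_s,\mu)\in I$ and run a sequence of reductions, using the standing facts that $\lambda_c-\id$ applied to anything in $I$ stays in $I$ (the Lemma from \cite[Lemma~2.5]{BCJO} quoted in the excerpt) and that $I$ is a subgroup of $(B,+)$ and normal in $(B,\cdot)$. Multiplying the chosen element by a suitable integer (the orders of the various coordinate groups are coprime to each other in the relevant way: the $p$-part versus the $l_i$-parts, using $p\mid q-1$) lets me isolate individual coordinates. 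The goal of this phase is to show $\{0\}\times\cdots\times\{0\}\times\mathbb Z/(p)\subseteq I$, i.e.\ the ``central'' copy of $\mathbb Z/(p)$ (the $\mu'$-slot) lies in $I$: if the original element has nonzero $u_i$ for some $i$, I use non-degeneracy of $b_i$ to find $v_i$ with $b_i(u_i,v_i)\neq 0$ and compute $(\lambda_{(0,\dots,v_i,\dots,0)}-\id)$ of the element to land a nonzero element of the $\mu'$-slot in $I$; since that slot has order $p$, all of it is in $I$. If instead all $u_i=0$ but some $a_i\neq 0$, I first act by an element of the $\mu$-slot: because $\gamma_i-1$ is invertible (hypothesis (ii)), $(\lambda_{(0,\dots,0,1)}-\id)$ turns a nonzero $a_i$ into another nonzero element of $\mathbb Z/(l_i)$, and iterating/combining I can eventually produce a nonzero $u_i$ (by conjugating or by using the mixed commutator structure $f_ic_i=c_i^{\gamma_i}f_i$) — alternatively, I can argue directly that the $a_i$-reduction already lets me reach the $\mu'$-slot. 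And if the element lies entirely in the $\mu$-slot, I conjugate the whole brace to create nonzero lower coordinates. Once the $\mu'$-slot $\{0\}\times\cdots\times\mathbb Z/(p)$ is in $I$, acting by it on elements $(0,\dots,0,a_i,0,\dots,0,0)$ via $(\lambda-\id)$ and using invertibility of $\gamma_i-1$ gives the full $\mathbb Z/(l_i)$-slots, and then acting on $(0,\dots,u_i,\dots,0,0,\dots,0)$ via $(\lambda-\id)$ produces $((c_i-\id)(u_i),0,\dots)$ and $((f_i-\id)(u_i),0,\dots)$-type elements up to a correction in the $\mu'$-slot (which is already in $I$); since $W_i=V_i$, this yields the entire $V_i$-slot. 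Combining all coordinate slots gives $I=B$.

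The main obstacle I anticipate is the bookkeeping in the first phase: cleanly showing that \emph{whatever} nonzero element one starts with, the reductions force the central $\mu'$-slot into $I$. The subtlety is that $a_i\neq 0$ with all $u_i=0$ does not immediately give anything in the $V_i$-slots, so one has to use conjugation by generic elements of $B$ (whose effect mixes the $u$- and $a$-coordinates through the twisted relation $f_ic_i=c_i^{\gamma_i}f_i$) to ``wake up'' a $u_i$-coordinate, and one must check this genuinely produces a \emph{nonzero} $u_i$ rather than collapsing. I would handle this by splitting into the same three mutually exclusive cases used in the earlier theorem's proof, and in the hardest case reduce to the previous case by an explicit conjugation computation, exactly paralleling the pattern already demonstrated in the excerpt for $\overline G_n\rtimes_\circ A$. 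The final clause of the statement — that $c_i-\id$ invertible for all $i$ suffices — is then immediate, since invertibility of $c_i-\id$ forces $\im(c_i-\id)=V_i$, a fortiori $W_i=V_i$.
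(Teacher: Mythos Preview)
Your proposal is essentially the paper's own argument: show every nonzero ideal contains $I'$, then invoke Lemma~\ref{ideal} for both directions. Your case split and the endgame (once the $\mu'$-slot is captured, use $(\lambda_{(0,\dots,0,1)}-\id)$ with invertibility of $\gamma_i-1$ to get the $a$-slots, then $(\lambda_{(0,\dots,0,a',\mu')}-\id)$ on $V_i$-elements to get $W_i=V_i$) match the paper exactly.

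One correction in your middle case (all $u_i=0$, some $a_i\neq 0$): the step you describe first, applying $(\lambda_{(0,\dots,0,1)}-\id)$ to your element of $I$, is a dead end---it sends $(0,\dots,0,a_1,\dots,a_s,0)$ to $(0,\dots,0,(\gamma_1-1)a_1,\dots,(\gamma_s-1)a_s,0)$, so no amount of iteration ever leaves the $a$-slots, and the twisted relation $f_ic_i=c_i^{\gamma_i}f_i$ plays no role here. The move that actually works is the conjugation you mention only afterwards: first scale by $p^2$ (which kills the $u_i$ and $\mu$ coordinates but not $a_i$, since $p\nmid l_i$) to reduce to $(0,\dots,0,a_1,\dots,a_s,0)\in I$, and then conjugate in the multiplicative group by $(v_1,\dots,v_s,0,\dots,0,0)$ to obtain
\[
((c_1^{a_1}-\id)(v_1),\dots,(c_s^{a_s}-\id)(v_s),a_1,\dots,a_s,0)\in I.
\]
Since $c_i$ has exact order $l_i$ and $a_i\neq 0$, one can choose $v_i$ with $(c_i^{a_i}-\id)(v_i)\neq 0$; subtracting the original element lands you in your first case (some $u_i\neq 0$). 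This is exactly how the paper handles it, and with this single computation made explicit your sketch goes through.
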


\begin{proof} It is enough to prove that any non-zero ideal of $B$ contains
the ideal $I'$ defined in the previous lemma.
So let $I$ be a nonzero ideal of $B$. Let $(u_1,\dots ,u_s,a_1,\dots
,a_s,\mu)$ be a nonzero element of $I$.
 We consider two cases.

 {\it Case 1:}
suppose that some $a_i$ is nonzero. Then
\begin{eqnarray*}p(u_1,\dots,u_s,a_1,\dots,a_s,\mu)&=&(pu_1,\dots,pu_s,pa_1,\dots
,pa_s,\mu')\\
&=&(0,\dots ,0,pa_1,\dots ,pa_s,\mu')\in I,\end{eqnarray*} for some
$\mu'\in \mathbb{Z}/(p)$, and $pa_i\neq 0$ because the order of
$A$ is not a multiple of the prime $p$.  Now
$$p(0,\dots,0,pa_1,\dots,pa_s,\mu')=(0,\dots,0,p^2a_1,\dots
,p^2a_s,0)\in I,$$ and $p^2a_i\neq 0$. Thus we may assume that
$(0,\dots ,0,a_1,\dots,a_s,0)\in I$ and $a_i\neq 0$. Let $v_i\in
(\mathbb{Z}/(p))^{n_i}$ and for $j\neq i$, let $v_j=0\in
(\mathbb{Z}/(p))^{n_j}$. Then
\begin{eqnarray*}\lefteqn{(v_1,\dots,v_s,0,\dots ,0,0)^{-1}(0,\dots
,0,a_1,\dots,a_s,0)(v_1,\dots,v_s,0,\dots ,0,0)}\\
&=&(-v_1,\dots,-v_s,0,\dots,0,0)(c_1^{a_1}(v_1),\dots,c_s^{a_s}(v_s),a_1,\dots ,a_s,0)\\
&=&((c_1^{a_1}-\id)(v_1),\dots,(c_s^{a_s}-\id)(v_s),a_1,\dots,a_s,0)\in
I.
\end{eqnarray*}
Since $c_i$ has order $l_i$ and $a_i\neq 0$, there exists $v_i\in
(\mathbb{Z}/(p))^{n_i}$ such that $(c_i^{a_i}-\id)(v_i)\neq 0$. Now
\begin{eqnarray*}\lefteqn{\!\!\!\!\!\!\!\!\!\!\!\! ((c_1^{a_1}-\id)(v_1),\dots,(c_s^{a_s}-\id)(v_s),a_1,\dots,a_s,0)-(0,\dots,0,a_1,\dots,a_s,0)}\\
&&=((c_1^{a_1}-\id)(v_1),\dots,(c_s^{a_s}-\id)(v_s),0,\dots,0,0)\in
I.\end{eqnarray*} Thus we may assume that
$(0,\dots,0,u_i,0,\dots,0,0,\dots, 0,0)\in I$ for some nonzero
element $u_i\in (\mathbb{Z}/(p))^{n_i}$. Since $b_i$ is
non-degenerate, there exists $w_i\in (\mathbb{Z}/(p))^{n_i}$ such
that $b_i(w_i,u_i)\neq 0$. We have that
\begin{eqnarray*}
\lefteqn{(\lambda_{(0,\dots,0,u_i,0,\dots,0,0,\dots,
0,0)}-\id)(0,\dots,0,w_i,0,\dots,0,0,\dots, 0,0)}\\
&&=(0,\dots,0,0,\dots,0,-b_i(w_i,u_i))\in I.
\end{eqnarray*}
Hence $\{(0,\dots,0,0,\dots,0,\mu')\mid \mu'\in
\mathbb{Z}/(p)\}\subseteq I$. Note that for arbitrary $a_{1}', \ldots , a_{n}'$,
\begin{eqnarray*}
\lefteqn{(\lambda_{(0,\dots ,0,0,\dots,
0,1)}-\id)(0,\dots,0,a'_1,\dots,a'_n,0)}\\
&=&(0,\dots,0,(\gamma_1-1)a'_1,\dots,(\gamma_s-1)a'_s,0)\in I.
\end{eqnarray*}
Since, by assumption,  $\gamma_j-1$ is invertible in
$\mathbb{Z}/(l_j)$, we have that
$$\{(0,\dots ,0,a'_1,\dots ,a'_s,0)\mid a'_j\in \mathbb{Z}/(l_i)\}\subseteq I.$$
Hence
$$\{(0,\dots ,0,a'_1,\dots ,a'_s,\mu')\mid a'_j\in \mathbb{Z}/(l_i),\; \mu'\in \mathbb{Z}/(p)\}\subseteq I.$$
Now we have that
\begin{eqnarray*}\lefteqn{(\lambda_{(0,\dots
,0,a'_1,\dots ,a'_s,\mu')}-\id)(v_1,\dots,v_s,0,\dots,0,0)}\\
&=&((c_1^{a'_1}f_1^{\mu'}-\id) (v_1),\dots
,(c_s^{a'_s}f_s^{\mu'}-\id)(v_s),0,\dots,0,0)\in I,
\end{eqnarray*}
for all $v_j\in (\mathbb{Z}/(p))^{n_j}$, all $a'_j\in
\mathbb{Z}/(l_i)$ and all $\mu'\in \mathbb{Z}/(p)$.  Hence
\begin{eqnarray*}
 I'&=&\{(w_1,\dots,w_s,a'_1,\dots,a'_s,\mu')\mid w_j\in \im(c_i-\id)+\im(f_i-\id),\\
  && \;\; a'_j\in
\mathbb{Z}/(l_i),\; \mu'\in \mathbb{Z}/(p)\}\subseteq I,
\end{eqnarray*}
 as desired.

 {\it Case 2:} suppose that $a_j=0$ for all $j$. Thus $(u_1,\dots, u_s,0,\dots
,0,\mu)\in I$ is a nonzero element. If $\mu\neq 0$, then
$$(\lambda_{(u_1,\dots, u_s,0,\dots
,0,\mu)}-\id)(0,\dots ,0,1,\dots ,1,0)=(0,\dots
,0,\gamma_1^{\mu}-1,\dots ,\gamma_s^{\mu}-1,0)\in I.$$ Therefore, by Case 1,
 $I'\subseteq I$. If $\mu=0$, then some $u_i\neq 0$ and
$$\lambda_{(0,\dots ,0,w_i,0,\dots ,0,0\dots ,0,0)}(u_1,\dots, u_s,0,\dots
,0,0)=(0,\dots ,0,0,\dots ,0,-b_i(w_i,u_i))\in I.$$ Since $b_i$ is
non-degenerate, there exists $w_i\in (\mathbb{Z}/(p))^{n_i}$ such
that $b_i(w_i,u_i)\neq 0$. Therefore, by the first part of Case 2,
$I'\subseteq I$. Thus the result follows.
\end{proof}

\subsection{Concrete construction of simple left braces}

In this section we give concrete realisations of simple braces as
constructed in Theorem~\ref{main}.

Let $m$ be a positive integer and let $p,p_1,p_2,\dots, p_{m}$ be
different primes such that $p\mid p_j-1$, for all $j=1,\dots,m$. Let
$s$ be a positive integer and let
$A=\mathbb{Z}/(l_1)\times\dots\times \mathbb{Z}/(l_s)$, where each
$l_i$ is an integer such that $l_i>1$ and
$$l_i=p_1^{r_{i,1}}\cdots p_m^{r_{i,m}},$$
for some nonnegative integers $r_{i,j}$.

Consider the natural isomorphism of rings
$$\mathbb{Z}/(l_i)\cong \mathbb{Z}/(p_1^{r_{i,1}})\times\cdots\times \mathbb{Z}/(p_m^{r_{i,m}}).$$
Since $p\mid p_j-1$, for all $j=1,\dots,m$, if $r_{i,j}>0$, then
there exist invertible elements $\gamma_{i,j}\in
\mathbb{Z}/(p_j^{r_{i,j}})$ of order $p$. If $r_{i,j}=0$, let
$\gamma_{i,j}=0$. Let $\gamma_i\in \mathbb{Z}/(l_i)$ be the element
corresponding to $(\gamma_{i,1},\dots ,\gamma_{i,m})$ under the
natural isomorphism. Then, $\gamma_i$ is an invertible element of
order $p$ and $\gamma_i-1$ corresponds to the element
$(\gamma_{i,1}-1,\dots ,\gamma_{i,m}-1)$.  Note that if $r_{i,j}>0$
then $(1+p_{j})^{p_{j}^{r_{i,j}-1}}=1$ in
$\mathbb{Z}/(p_j^{r_{i,j}})$. Hence, in this case,
$\gamma_{i,j}-1\notin p_j\mathbb{Z}/(p_j^{r_{i,j}})$, and
consequently, $\gamma_{i,j}-1$ is invertible in
$\mathbb{Z}/(p_j^{r_{i,j}})$. Therefore, $\gamma_i-1$ is invertible
in $\mathbb{Z}/(l_i)$.

For $i=1,\dots ,s$, define the ring
$R_i=\mathbb{F}_p[x]/(x^{l_i-1}+\cdots+x+1)$, and denote
$\xi_i:=\overline{x}\in R_i$. Let $c_i\in \Aut(R_i,+)$ be defined by
$$c_i(r_i)=\xi_ir_i,$$
for all $r_i\in R_i$. Let $f_i\in \Aut(R_i,+)$ be defined by
$$f_i(r_i(\xi_i))=r_i(\xi_i^{\gamma_i}),$$
for all $r_i(\xi_i)\in R_i$. Note that
$$f_ic_i(r_i(\xi_i))=f_i(\xi_ir_i(\xi_i))=\xi_i^{\gamma_i}r_i(\xi_i^{\gamma_i})=c_i^{\gamma_i}f_i(r_i(\xi_i)),$$
for all $r_i(\xi_i)\in R_i$. Hence $f_ic_i=c_i^{\gamma_i}f_i$.

Note that
\begin{align*}
l_i-1&=p_1^{r_{i,1}}\cdots p_m^{r_{i,m}}-1\\
&=p_1^{r_{i,1}}\cdots p_m^{r_{i,m}}-p_1^{r_{i,1}}\cdots
p_{m-1}^{r_{i,m-1}}+p_1^{r_{i,1}}\cdots p_{m-1}^{r_{i,m-1}}-1\\
&=p_1^{r_{i,1}}\cdots
p_{m-1}^{r_{i,m-1}}(p_m^{r_{i,m}}-1)+p_1^{r_{i,1}}\cdots
p_{m-1}^{r_{i,m-1}}-1\\
&=p_1^{r_{i,1}}\cdots
p_{m-1}^{r_{i,m-1}}(p_m^{r_{i,m}}-1)+p_1^{r_{i,1}}\cdots
p_{m-2}^{r_{i,m-2}}(p_{m-1}^{r_{i,m-1}}-1)+\dots + p_1^{r_{i,1}}-1.
\end{align*}
Since $p\mid p_j-1$, for all $j=1,\dots ,m$, we have that $p\mid
l_i-1$, for all $i$. Since the characteristic polynomial of $c_i$ is
$x^{l_i-1}+\cdots+x+1\in \mathbb{F}_p[x]$ and $p\mid l_i-1$,
$c_i-\id$ is invertible.

Let $b_i\colon R_i\times R_i\longrightarrow \mathbb{F}_p$ be the
unique bilinear form such that
$b_i(\xi_i^j,\xi_i^k)=1-\delta_{j,k}$, for all $j,k\in
\{0,1,\dots,l_i-2\}$. Note that $(1,\xi_i,\dots ,\xi_i^{l_i-2})$ is a
basis of the $\mathbb{F}_p$-vector space $R_i$. The matrix of $b_i$
with respect to this basis is
$$\begin{pmatrix}
0& 1& \cdots &1\\
1& \ddots & \ddots & \vdots\\
\vdots& \ddots & \ddots & 1\\
1&\cdots & 1& 0
\end{pmatrix}\in M_{l_i-1}(\mathbb{F}_p).$$
Since $p\mid l_i-1$, the determinant of this matrix is
$(l_i-2)(-1)^{l_i-2}=(-1)^{l_i-1}\in \mathbb{F}_p$. Hence $b_i$ is
non-degenerate. Furthermore
$$b_i(f_i(\xi_i^{j}),f_i(\xi_i^k))=b_i(\xi_i^{j\gamma_i},\xi_i^{k\gamma_i})=1-\delta_{j,k}=b_i(\xi_i^j,\xi_i^k),$$
and
\begin{eqnarray*}
&&b_i(c_i(\xi_i^j),c_i(\xi_i^k))=b_i(\xi_i^{j+1},\xi_i^{k+1})=1-\delta_{j,k}\quad\mbox{(if
 $0\leq j,k<l_i-2$),}\\
&&b_i(c_i(\xi_i^{l_i-2}),c_i(\xi_i^k))=b_i(\xi_i^{l_i-1},\xi_i^{k+1})=-\sum_{t=0}^{l_i-2}(1-\delta_{t,k+1})=-(l_i-2)=1\\
&&\quad\mbox{(if
 $0\leq k<l_i-2$) and}\\
&&b_i(c_i(\xi_i^{l_i-2}),c_i(\xi_i^{l_i-2}))=b_i(\xi_i^{l_i-1},\xi_i^{l_i-1})=\sum_{t,t'=0}^{l_i-2}(1-\delta_{t,t'})\\
&&\hphantom{xxxxxxxxxxxxxxxx}\, =(l_i-1)^2-l_i+1=0.
\end{eqnarray*}
 Therefore $f_i$ and $c_i$ are in the
orthogonal group of $b_i$.

Let $T=(R_1\times \dots \times R_s)\rtimes A$ be the semidirect
product of the trivial braces $R_1\times \dots\times R_s$ and $A$
via the action
$$\beta\colon A\longrightarrow \Aut(R_1\times\dots\times R_s)$$
defined by $\beta_{(a_1,\dots a_s)}(u_1,\dots
,u_s)=(c_1^{a_1}(u_1),\dots,c_s^{a_s}(u_s))$, where $\beta(a_1,\dots
,a_s)=\beta_{(a_1,\dots ,a_s)}$, for all $a_i\in \mathbb{Z}/(l_i)$
and all $u_i\in R_i$.

We define the map $\alpha\colon (\mathbb{Z}/(p),+)\longrightarrow
\Aut(T,+,\cdot)$  by
$$\alpha_{\mu}(u_1,\dots ,u_s,a_1,\dots
,a_s)=(f_1^{\mu}(u_1),\dots ,f_s^{\mu}(u_s),\gamma_1^{\mu}a_1,\dots
,\gamma_s^{\mu}a_s),$$ where $\alpha(\mu)=\alpha_{\mu}$, for all
$\mu\in\mathbb{F}_p$, all $a_i\in \mathbb{Z}/(l_i)$ and all $u_i\in
R_i$.

Let $b\colon T\times T\longrightarrow \mathbb{F}_p$ be the
symmetric bilinear form defined by
$$b((u_1,\dots ,u_s,a_1,\dots ,a_s),(v_1,\dots ,v_s,a'_1,\dots
,a'_s))=\sum_{i=1}^sb_i(u_i,v_i),$$ for all $a_i\in
\mathbb{Z}/(l_i)$ and all $u_i\in R_i$.

Since $c_i-\id$ is invertible, Theorem~\ref{main} yields that the
the asymmetric product $T\rtimes_{\circ} \mathbb{F}_p$ of $T$ by
$\mathbb{F}_p$, via $b$ and $\alpha$, is a simple  left brace.

 \begin{remark} {\rm In our approach we exploited the bilinear form with matrix
$B = (b_{ij})$, where $b_{ii} = 0$ and $b_{ij} = 1$ for $ i \neq j $.
Every permutation matrix is
in the orthogonal group of this bilinear form. In the concrete
constructions we show how to construct an element $f$ in the
orthogonal group that has the desired properties. However, one can
give a general reason why such elements exist.

Assume that $q$ and $p$ are prime numbers such that $p\mid(q-1)$.
Let $c\in S_{n}$ be an element of the symmetric group $S_{n}$ for
some $n$. Assume that $c$ has order $q$. Assume also that $\gamma
\in \mathbb Z_{q}^{*}$ has order $p$. We claim that
$c^{\gamma}=fcf^{-1}$ for a permutation $f\in S_{n}$ of order $p$.

Notice that $c^{\gamma}$ also has order $q$ (since $p,q$ are
relatively prime) and in fact $c,c^{\gamma}$ are products of the
same number of disjoint cycles of length $q$. It is then enough to
consider the case where $c=(1,2,\ldots, q)$, a cycle of length
$q$, and $n=q$. In particular, there exists  $g\in S_{q}$ such
that $gcg^{-1} =c^{\gamma}$. By the hypothesis, $\gamma^{p} =qr+1$
for some nonnegative integer $r$. Then
$$g^{p}cg^{-p}=c^{\gamma^{p}}=c^{qr+1}=c.$$
 It is easy to see that the centralizer $C_{S_{q}}(c)$ of $c$
in the symmetric group $S_{q}$ is equal to the cyclic group
$\langle c \rangle$. Therefore $g^{p}\in \langle c \rangle$. In
particular, $g^{pq}=1$. Since $p\mid (q-1)$, it follows that
$q=pk+1$ for some $k$ and then $g^{q}= g^{pk+1}\in g
C_{S_{q}}(c)$. Therefore, $g^{q}cg^{-q} =gcg^{-1} =c^{\gamma}$.

Notice that $g^{q}\neq 1$, as otherwise $c^{\gamma}=c$, a
contradiction, because $c$ has order $q$ and $\gamma \in \mathbb
Z_{q}^{*}$ has order $p$. Therefore, $g^{q}\neq 1$ and then
$g^{q}$ is a permutation of order $p$. Hence, we may put
$f=g^{q}$.}
\end{remark}

\section{Other simple left braces as asymmetric product of left
braces}\label{known}

Our aim in this section is to show that all previously known
nontrivial left simple braces (see \cite[Theorems~3.1
and~3.6]{BCJO}) can be interpreted as asymmetric products of braces.
This sheds a new light on these classes of examples and raises the
question of a further application of the asymmetric product
construction in the context of the challenging program of finding
new (simple) left braces.

Let $H(p^r,n,Q,f)$ be the left brace described in
\cite[Theorem~3.1]{BCJO}, where $p$ is a prime number,  $r,n$ are
positive integers, $Q$ is a quadratic form over
$(\mathbb{Z}/(p^r))^n$ (considered as a free module over the ring
$\mathbb{Z}/(p^r)$) and  $f$ is an element in the orthogonal group
of $Q$ of order $p^{r'}$ for some $0\leq r'\leq r$.

Recall that the additive group of $H(p^r,n,Q,f)$ is the additive
abelian group $(\mathbb{Z}/(p^r))^{n+1}$. The elements of
$(\mathbb{Z}/(p^r))^{n+1}$ will be written in the form
$(\vec{x},\mu)$, with $\vec{x}\in (\mathbb{Z}/(p^r))^n$ and $\mu\in
\mathbb{Z}/(p^r)$. The lambda map of $H(p^r,n,Q,f)$ is defined by
\begin{eqnarray}\label{lamb}
\lambda_{(\vec{x},\mu)}(\vec{y},\mu')=(f^{q(\vec{x},\mu)}(\vec{y}),
\mu'+b(\vec{x},f^{q(\vec{x},\mu)}(\vec{y}))),
\end{eqnarray}
for $(\vec{x},\mu),(\vec{y},\mu')\in (\mathbb{Z}/(p^r))^{n+1}$,
where $q(\vec{x},\mu)=\mu-Q(\vec{x})$, and $b$ is the bilinear form
$b(\vec{x},\vec{y})=Q(\vec{x}+\vec{y})-Q(\vec{x})-Q(\vec{y})$
associated to $Q$.

Let $T$ be the trivial brace $(\mathbb{Z}/(p^r))^n$ and let $S$ be
the trivial brace $\mathbb{Z}/(p^r)$. Let $\alpha\colon
(S,\cdot)\longrightarrow \Aut(T,+,\cdot)$ be the map defined by
$\alpha(\mu)=\alpha_{\mu}$ and
$$\alpha_{\mu}(\vec{x})=f^{\mu}(\vec{x}),$$
for all $\mu\in S$ and $\vec{x}\in T$. It is clear that $\alpha$ is
a homomorphism of groups. We will check that $\alpha$ and $-b$
satisfy condition (\ref{cond3}). Since $-b$ is a symmetric bilinear
form, this condition is equivalent to $-b(\vec{x},\vec{y})=
-b(\alpha_{\mu}(\vec{x}), \alpha_{\mu}(\vec{y}))$, which is true
because $f$ is in the orthogonal group of $Q$. Let
$H'(p^r,n,-b,f)=T\rtimes_{\circ}S$ be the asymmetric product of $T$
by $S$ via $-b$ and $\alpha$.

\begin{proposition}
The map $\varphi\colon H(p^r,n,Q,f)\longrightarrow H'(p^r,n,-b,f)$
defined by $\varphi(\vec{x},\mu)=(\vec{x},\mu-Q(\vec{x}))$ is an
isomorphism of left braces.
\end{proposition}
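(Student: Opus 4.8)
The plan is to verify directly that $\varphi$ is a bijection and that it is simultaneously a homomorphism for $+$ and for $\cdot$; since a homomorphism of left braces is by definition a map respecting both operations, this is all that is needed. Bijectivity is immediate: the assignment $(\vec{x},s)\mapsto(\vec{x},s+Q(\vec{x}))$ is a two-sided inverse of $\varphi$ on the common underlying set $(\mathbb{Z}/(p^r))^{n+1}$.

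For additivity, recall that the addition of $H(p^r,n,Q,f)$ is the componentwise addition of $(\mathbb{Z}/(p^r))^{n+1}$, whereas by Theorem~\ref{ccs} (applied with cocycle $-b$) the addition of $H'(p^r,n,-b,f)=T\rtimes_\circ S$ is $(\vec{x}_1,s_1)+(\vec{x}_2,s_2)=(\vec{x}_1+\vec{x}_2,\,s_1+s_2-b(\vec{x}_1,\vec{x}_2))$. Comparing $\varphi((\vec{x},\mu)+(\vec{y},\mu'))$ with $\varphi(\vec{x},\mu)+\varphi(\vec{y},\mu')$, the first components agree trivially, and the equality of the second components is precisely the polarization identity $b(\vec{x},\vec{y})=Q(\vec{x}+\vec{y})-Q(\vec{x})-Q(\vec{y})$ that defines $b$ from $Q$.

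For multiplicativity, use $a\cdot b=a+\lambda_a(b)$ together with formula (\ref{lamb}) to write the product in $H(p^r,n,Q,f)$ as $(\vec{x},\mu)\cdot(\vec{y},\mu')=(\vec{x}+f^{q(\vec{x},\mu)}(\vec{y}),\,\mu+\mu'+b(\vec{x},f^{q(\vec{x},\mu)}(\vec{y})))$, where $q(\vec{x},\mu)=\mu-Q(\vec{x})$. On the other side, since $T$ and $S$ are trivial braces and $\alpha_\mu=f^\mu$, the multiplication in $T\rtimes_\circ S$ is $(\vec{x},s)\cdot(\vec{y},s')=(\vec{x}+f^{s}(\vec{y}),\,s+s')$, so that $\varphi(\vec{x},\mu)\cdot\varphi(\vec{y},\mu')=(\vec{x}+f^{q(\vec{x},\mu)}(\vec{y}),\,q(\vec{x},\mu)+q(\vec{y},\mu'))$. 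It then remains only to check that $\varphi$ applied to the $H$-side product has the same second component: expanding $Q(\vec{x}+f^{q}(\vec{y}))=Q(\vec{x})+Q(f^{q}(\vec{y}))+b(\vec{x},f^{q}(\vec{y}))$ and using that $f$ lies in the orthogonal group of $Q$, so $Q(f^{q}(\vec{y}))=Q(\vec{y})$, the two $b$-terms cancel and the second component collapses to $q(\vec{x},\mu)+q(\vec{y},\mu')$, as required.

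I expect no real obstacle here: the only substantive inputs are the polarization identity and the $f$-invariance of $Q$, both part of the defining data of $H(p^r,n,Q,f)$. The single point deserving care is to keep track consistently of the change of coordinates $\mu\mapsto\mu-Q(\vec{x})=q(\vec{x},\mu)$ in the last component, since this is exactly the device that converts the ``twisted-by-$Q$'' description of $H$ into the asymmetric-product description of $H'$; one should also note in passing that $\varphi$ sending $0=(\vec 0,0)$ to $(\vec 0,0)$ is automatic. An equivalent route, if preferred, is to check additivity as above and then verify $\varphi(\lambda_{(\vec{x},\mu)}(\vec{y},\mu'))=\lambda_{\varphi(\vec{x},\mu)}(\varphi(\vec{y},\mu'))$ using (\ref{lamb}) and (\ref{deflambda}); the computation is the same up to the rearrangement $a\cdot b=a+\lambda_a(b)$.
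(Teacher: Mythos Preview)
Your proof is correct and follows essentially the same approach as the paper: both verify additivity via the polarization identity $b(\vec{x},\vec{y})=Q(\vec{x}+\vec{y})-Q(\vec{x})-Q(\vec{y})$ and multiplicativity via that identity together with the $f$-invariance of $Q$. You add an explicit inverse for bijectivity and mention the alternative route through the lambda maps, but the core computation is identical.
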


\begin{proof}
Let $\vec{x},\vec{y}\in T$ and $\mu,\mu'\in S$. We have
\begin{align*}
\varphi((\vec{x},\mu)+(\vec{y},\mu'))&=\varphi(\vec{x}+\vec{y},\mu+\mu')\\
&=(\vec{x}+\vec{y},\mu+\mu'-Q(\vec{x}+\vec{y}))\\
&=(\vec{x}+\vec{y},\mu+\mu'-b(\vec{x},\vec{y})-Q(\vec{x})-Q(\vec{y}))\\
&=(\vec{x},\mu-Q(\vec{x}))+(\vec{y},\mu'-Q(\vec{y}))\\
&=\varphi(\vec{x},\mu)+\varphi(\vec{y},\mu')
\end{align*}
and
\begin{align*}
\varphi((\vec{x},\mu)&\cdot(\vec{y},\mu'))\\
&=\varphi(\vec{x}+f^{\mu-Q(\vec{x})}(\vec{y}),\mu+\mu'+b(\vec{x},f^{\mu-Q(\vec{x})}(\vec{y})))\\
&=(\vec{x}+f^{\mu-Q(\vec{x})}(\vec{y}),\mu+\mu'+b(\vec{x},f^{\mu-Q(\vec{x})}(\vec{y}))-Q(\vec{x}+f^{\mu-Q(\vec{x})}(\vec{y})))\\
&=(\vec{x}+f^{\mu-Q(\vec{x})}(\vec{y}),\mu+\mu'-Q(\vec{x})-Q(f^{\mu-Q(\vec{x})}(\vec{y})))\\
&=(\vec{x}+f^{\mu-Q(\vec{x})}(\vec{y}),\mu+\mu'-Q(\vec{x})-Q(\vec{y}))\\
&=(\vec{x},\mu-Q(\vec{x}))\cdot (\vec{y},\mu'-Q(\vec{y}))\\
&=\varphi(\vec{x},\mu)\cdot\varphi(\vec{y},\mu').
\end{align*}
Thus the result follows.
\end{proof}

Consider now the left brace $H_1\bowtie\dots \bowtie H_s$ described
in \cite[Theorem~3.6]{BCJO}. Recall that
$H_i=H(p_i^{r_i},n_i,Q_i,f_i)$, $s>1$, $\;p_1,p_2,\ldots ,p_s$ are
different prime numbers, $p_1,p_2,\dots ,p_{s-1}$ are odd, $Q_i$ is
a non-degenerate quadratic form over $(\mathbb
Z/(p_i^{r_i}))^{n_i}$, $f_i$ is an element of  order $p_i^{r'_i}$ in
the orthogonal group determined by $Q_i$,  for some $0\leq r'_i\leq
r_i$, and the iterated matched product of left ideals is constructed
with the maps $$\alpha^{(j,i)}: (H_{j},\cdot) \longrightarrow \Aut
(H_i,+):(\vec{x}_{j},\mu_{j})\mapsto
\alpha^{(j,i)}_{(\vec{x}_{j},\mu_{j})},$$ with
$$\alpha^{(k+1,k)}_{(\vec{x}_{k+1},\mu_{k+1})}(\vec{x}_k,\mu_k)=(c_{k}^{\mu_{k+1}-Q_{k+1}(\vec{x}_{k+1})}(\vec{x}_k),
\mu_k),$$  for $1\leq k<s$,
$$\alpha^{(1,s)}_{(\vec{x}_1,\mu_1)}(\vec{x}_{s},\mu_{s})
=(c_{s}^{\mu_1-Q_{1}(\vec{x}_1)}(\vec{x}_{s}), \mu_{s}+
v_{s}((\id+c_{s}+\dots
+c_{s}^{\mu_1-Q_{1}(\vec{x}_1)-1})(\vec{x}_{s}))^t),$$
and
$\alpha^{(j,i)}_{(\vec{x}_{j},\mu_{j})}=\id_{H_i}$ otherwise, where,
for $1\leq i< s$, $c_{i}$ is an element of order $p_{i+1}^{r_{i+1}}$
in the orthogonal group determined by $Q_i$, $c_s$ is an element of
order $p_{1}^{r_{1}}$ of $\aut((\mathbb{Z}/(p_s^{r_s}))^{n_s})$ and
$v_{s}\in (\mathbb Z/(p_{s}^{r_{s}}))^{n_{s}}$, such that
\begin{eqnarray}\label{Qj+1}
&&Q_{s}(c_{s}(\vec{x}))=Q_{s}(\vec{x})+v_{s}\vec{x}^t,\end{eqnarray}
and $$f_ic_{i}=c_{i}f_i,$$ for  $1\leq i\leq s$. The lambda map of
$H_1\bowtie\dots \bowtie H_s$ is defined by
\begin{eqnarray*}
\lefteqn{\lambda_{(\vec{x}_{1},\mu_{1},\dots
,\vec{x}_{s},\mu_{s})}(\vec{y}_{1},\mu'_{1},\dots
,\vec{y}_{s},\mu'_{s})}\\
&=& (f_1^{\mu_1-Q_1(\vec{x}_1)}c_1^{\mu_2-Q_2(\vec{x}_2)}(\vec{y}_1),\mu'_1+b_1(\vec{x}_1,f_1^{\mu_1-Q_1(\vec{x}_1)}c_1^{\mu_2-Q_2(\vec{x}_2)}(\vec{y}_1)),\dots,\\
&&\qquad
f_{s-1}^{\mu_{s-1}-Q_{s-1}(\vec{x}_{s-1})}c_{s-1}^{\mu_s-Q_s(\vec{x}_s)}(\vec{y}_{s-1}),\mu'_{s-1}\\
&&\qquad +b_{s-1}(\vec{x}_{s-1},f_{s-1}^{\mu_{s-1}-Q_{s-1}(\vec{x}_{s-1})}c_{s-1}^{\mu_s-Q_s(\vec{x}_s)}(\vec{y}_{s-1})),\\
&&\qquad
f_s^{\mu_s-Q_s(\vec{x}_s)}c_s^{\mu_1-Q_1(\vec{x}_1)}(\vec{y}_s),\mu'_s+v_{s}((\id+c_{s}+\dots
+c_{s}^{\mu_1-Q_{1}(\vec{x}_1)-1})(\vec{y}_{s}))^t\\
&&\qquad
+b_s(\vec{x}_s,f_s^{\mu_s-Q_s(\vec{x}_s)}c_s^{\mu_1-Q_1(\vec{x}_1)}(\vec{y}_s)),
\end{eqnarray*}
for all $(\vec{x}_{i},\mu_{i}),(\vec{y}_{i},\mu'_{i})\in H_i$.

By \cite[Theorem~3.6]{BCJO} the left brace $H_1\bowtie\dots \bowtie
H_s$ is simple if and only if $c_i-\id$ is an automorphism for all
$1\leq i\leq s$.

Let $T'$ be the trivial brace
$(\mathbb{Z}/(p_1^{r_1}))^{n_1}\times\dots\times
(\mathbb{Z}/(p_s^{r_s}))^{n_s}$ and let $S'$ be the trivial brace
$\mathbb{Z}/(p_1^{r_1})\times\dots\times \mathbb{Z}/(p_s^{r_s})$.
Let $\alpha'\colon (S',\cdot)\longrightarrow \Aut(T',+,\cdot)$ be
the map defined by $\alpha'(\mu_1,\dots
,\mu_s)=\alpha'_{(\mu_1,\dots ,\mu_s)}$ and
$$\alpha'_{(\mu_1,\dots ,\mu_s)}(\vec{x}_1,\dots ,\vec{x}_s)=(f_1^{\mu_1}c_1^{\mu_2}(\vec{x}_1),\dots,f_{s-1}^{\mu_{s-1}}c_{s-1}^{\mu_s}(\vec{x}_{s-1}),f_s^{\mu_s}c_s^{\mu_1}(\vec{x}_s)),$$
for all $(\mu_1,\dots ,\mu_s)\in S'$ and $(\vec{x}_1,\dots
,\vec{x}_s)\in T'$. Let $b'\colon T'\times T'\longrightarrow S'$ be
the map defined by
$$b'((\vec{x}_1,\dots ,\vec{x}_s),(\vec{y}_1,\dots ,\vec{y}_s))=(-b_1(\vec{x}_1,\vec{y}_1),\dots ,-b_s(\vec{x}_s,\vec{y}_s)),$$
for all $(\vec{x}_1,\dots ,\vec{x}_s),(\vec{y}_1,\dots
,\vec{y}_s)\in T'$. We will check that $\alpha'$ and $b'$ satisfy
condition (\ref{cond3}). Since $b'$ is a symmetric bilinear form
this is equivalent to
$$b'((\vec{x}_1,\dots
,\vec{x}_s),(\vec{y}_1,\dots ,\vec{y}_s))= b'(\alpha'_{(\mu_1,\dots
,\mu_s)}(\vec{x}_1,\dots ,\vec{x}_s),\alpha'_{(\mu_1,\dots
,\mu_s)}(\vec{y}_1,\dots ,\vec{y}_s)),$$ which is true because, for
$1\leq i<s$, $c_i$ and $f_i$ are in the orthogonal group of $Q_i$,
$f_s$ is in the orthogonal group of $Q_s$ and
\begin{align*}
b_s(c_s(\vec{x}_s),c_s(\vec{y}_s))&=Q_s(c_s(\vec{x}_s)+c_s(\vec{y}_s))-Q_s(c_s(\vec{x}_s))-Q_s(c_s(\vec{y}_s))\\
&=Q_s(c_s(\vec{x}_s+\vec{y}_s))-Q_s(c_s(\vec{x}_s))-Q_s(c_s(\vec{y}_s))\\
&=Q_s(\vec{x}_s+\vec{y}_s)+v_s(\vec{x}_s+\vec{y}_s)^t-Q_s(\vec{x}_s)-v_s\vec{x}_s^t-Q_s(\vec{y}_s)-v_s\vec{y}_s^t\\
&=Q_s(\vec{x}_s+\vec{y}_s)-Q_s(\vec{x}_s)-Q_s(\vec{y}_s)\\
&=b_s(\vec{x}_s,\vec{y}_s).
\end{align*}
Let $B=T'\rtimes_{\circ}S'$ be the asymmetric product of $T'$ by
$S'$ via $b'$ and $\alpha'$.

\begin{theorem}
The map $\varphi'\colon H_1\bowtie\dots\bowtie H_s\longrightarrow B$
defined by
$$\varphi'(\vec{x}_1,\mu_1,\dots,\vec{x}_s,\mu_s)=(\vec{x}_1,\dots,\vec{x}_s,\mu_1-Q_1(\vec{x}_1),\dots
,\mu_s-Q_s(\vec{x}_s))$$ is an isomorphism of left braces.
\end{theorem}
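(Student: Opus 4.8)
The plan is to proceed exactly as in the preceding Proposition: check that $\varphi'$ is a bijection and that it is a homomorphism for both $+$ and $\cdot$. Bijectivity is immediate, since the map
$(\vec x_1,\dots,\vec x_s,\nu_1,\dots,\nu_s)\mapsto(\vec x_1,\nu_1+Q_1(\vec x_1),\dots,\vec x_s,\nu_s+Q_s(\vec x_s))$
is a two-sided inverse. So the work is in verifying additivity and multiplicativity block by block.

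For additivity, recall that the additive group of $H_1\bowtie\dots\bowtie H_s$ is the direct product of the additive groups of the $H_i$, so addition there is componentwise, whereas in $B=T'\rtimes_\circ S'$ one has $(\vec x,\nu)+(\vec y,\nu')=(\vec x+\vec y,\ \nu+\nu'+b'(\vec x,\vec y))$ with $b'=(-b_1,\dots,-b_s)$. The vector coordinates of $\varphi'(a+b)$ and $\varphi'(a)+\varphi'(b)$ agree trivially, because $T'$ and the additive groups of the $H_i$ carry the same componentwise addition, and on the $i$-th scalar coordinate the desired equality is $\mu_i+\mu'_i-Q_i(\vec x_i+\vec y_i)=(\mu_i-Q_i(\vec x_i))+(\mu'_i-Q_i(\vec y_i))-b_i(\vec x_i,\vec y_i)$, i.e. precisely the defining relation $b_i(\vec x_i,\vec y_i)=Q_i(\vec x_i+\vec y_i)-Q_i(\vec x_i)-Q_i(\vec y_i)$.

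For multiplicativity I would first extract the product of $H_1\bowtie\dots\bowtie H_s$ from the displayed lambda map via $a\cdot b=\lambda_a(b)+a$, valid in any left brace. Adding $(\vec x_1,\mu_1,\dots,\vec x_s,\mu_s)$ componentwise to the lambda formula, the $i$-th block of $a\cdot b$ (for $1\le i<s$) has vector part $w_i:=\vec x_i+f_i^{\mu_i-Q_i(\vec x_i)}c_i^{\mu_{i+1}-Q_{i+1}(\vec x_{i+1})}(\vec y_i)$ — with the convention $\vec x_{s+1},\mu_{s+1}$ read cyclically for the $s$-th block — and scalar part obtained by adding $\mu_i$, namely $\mu_i+\mu'_i+b_i\bigl(\vec x_i,\,f_i^{\mu_i-Q_i(\vec x_i)}c_i^{\mu_{i+1}-Q_{i+1}(\vec x_{i+1})}(\vec y_i)\bigr)$, and for $i=s$ additionally the summand $v_s\bigl((\id+c_s+\dots+c_s^{k-1})(\vec y_s)\bigr)^t$ with $k=\mu_1-Q_1(\vec x_1)$. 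On the other side, since $T'$ and $S'$ are trivial braces, the product in $B$ is $(\vec x,\nu)\cdot(\vec y,\nu')=(\vec x+\alpha'_\nu(\vec y),\ \nu+\nu')$; substituting $\nu_i=\mu_i-Q_i(\vec x_i)$ one checks that the vector coordinates of $\varphi'(a\cdot b)$ and $\varphi'(a)\cdot\varphi'(b)$ coincide immediately. It then remains to verify that the $i$-th scalar coordinate of $\varphi'(a\cdot b)$ equals $(\mu_i-Q_i(\vec x_i))+(\mu'_i-Q_i(\vec y_i))$; expanding $Q_i(\vec x_i+w_i)=Q_i(\vec x_i)+Q_i(w_i)+b_i(\vec x_i,w_i)$ and cancelling, this reduces for $i<s$ to $Q_i(w_i)=Q_i(\vec y_i)$ — true since $f_i$ and $c_i$ lie in the orthogonal group of $Q_i$ — and for $i=s$ to $Q_s(w_s)=Q_s(\vec y_s)+v_s\bigl((\id+c_s+\dots+c_s^{k-1})(\vec y_s)\bigr)^t$.

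The hard part is this last identity. Since $f_s$ preserves $Q_s$, one has $Q_s(w_s)=Q_s(c_s^{\,k}(\vec y_s))$, and then iterating the relation $Q_s(c_s(\vec v))=Q_s(\vec v)+v_s\vec v^t$ of (\ref{Qj+1}) telescopes $Q_s(c_s^{\,k}(\vec y_s))$ into exactly $Q_s(\vec y_s)+v_s\bigl((\id+c_s+\dots+c_s^{k-1})(\vec y_s)\bigr)^t$; the only subtlety is that the exponent $k=\mu_1-Q_1(\vec x_1)$ is a residue and must be handled exactly as in \cite{BCJO} so that the telescoping sum is well defined. Once these identities are in place, $\varphi'$ is a bijective homomorphism for both operations, hence an isomorphism of left braces; in particular the simplicity criterion $c_i-\id$ invertible for all $i$ transports across $\varphi'$ to the corresponding criterion for $B$.
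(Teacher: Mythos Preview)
Your approach is precisely the one the paper intends: the paper's proof simply says the verification is ``straightforward using the definitions of the sum and the multiplication in both left braces and formula (\ref{Qj+1})'', and you have carried out exactly that straightforward check, block by block, with the key ingredient being the telescoping induced by (\ref{Qj+1}) on the $s$-th coordinate.

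One small slip of notation to fix: you first introduce $w_i$ as the \emph{full} vector part $\vec x_i + f_i^{\mu_i-Q_i(\vec x_i)}c_i^{\mu_{i+1}-Q_{i+1}(\vec x_{i+1})}(\vec y_i)$, but later write ``expanding $Q_i(\vec x_i+w_i)$'' and reduce to $Q_i(w_i)=Q_i(\vec y_i)$, which only makes sense if $w_i$ denotes just the increment $f_i^{\,\cdots}c_i^{\,\cdots}(\vec y_i)$. Settle on the latter convention and the argument goes through as written.
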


\begin{proof}
 The proof is straightforward using the definitions of the sum and
the multiplication in both left braces and the formula (10).
\end{proof}

\section*{Acknowledgments}
The two first-named authors were partially supported by the grant
MINECO MTM2014-53644-P. The third author is supported in part by
Onderzoeksraad of Vrije Universiteit Brussel and Fonds voor
Wetenschappelijk Onderzoek (Belgium). The fourth author is supported
by a National Science Centre grant (Poland).

\vspace{30pt}
 \noindent \begin{tabular}{llllllll}
 D. Bachiller && F. Ced\'o  \\
 Departament de Matem\`atiques &&  Departament de Matem\`atiques \\
 Universitat Aut\`onoma de Barcelona &&  Universitat Aut\`onoma de Barcelona  \\
08193 Bellaterra (Barcelona), Spain    && 08193 Bellaterra (Barcelona), Spain \\
 dbachiller@mat.uab.cat &&  cedo@mat.uab.cat\\
   &&   \\
E. Jespers && J. Okni\'{n}ski  \\ Department of Mathematics &&
Institute of Mathematics
\\  Vrije Universiteit Brussel && Warsaw University \\
Pleinlaan 2, 1050 Brussel, Belgium && Banacha 2, 02-097 Warsaw, Poland\\
efjesper@vub.ac.be&& okninski@mimuw.edu.pl
\end{tabular}

\end{document}